\newtheorem{theorem}{Theorem}[section]
\newtheorem*{theorem*}{Theorem}
\newtheorem{lemma}[theorem]{Lemma}
\newtheorem*{lemma*}{Lemma}
\newtheorem{proposition}[theorem]{Proposition}
\newtheorem*{proposition*}{Proposition}
\newtheorem{corollary}[theorem]{Corollary}
\newtheorem*{corollary*}{Corollary}
\newtheorem*{claim*}{Claim}
\newtheorem*{fact*}{Fact}
\theoremstyle{definition}
\newtheorem*{definiton*}{Definition}
\newtheorem*{example*}{Example}
\newtheorem{remark}[theorem]{Remark}
\newtheorem*{remark*}{Remark}
\newtheorem*{question*}{Question}
\newtheorem*{conjecture*}{Conjecture}
\address{graduate school of mathematical science, the university of tokyo, 3-8-1 komaba, meguroku, tokyo 153-8914, japan.}
\email{kohara@ms.u-tokyo.ac.jp}
\subjclass[2020]{22E50}
\begin{document}
\title{hecke algebras for tame supercuspidal types}
\author{Kazuma Ohara}
\date{}
\maketitle
\begin{abstract}
Let $F$ be a non-archimedean local field of residue characteristic $p\neq 2$. Let $G$ be a connected reductive group over $F$ that splits over a tamely ramified extension of $F$. In \cite{Yu}, Yu constructed types which are called \emph{tame supercuspidal types} and conjectured that Hecke algebras associated with these types are isomorphic to Hecke algebras associated with depth-zero types of some twisted Levi subgroups of $G$. In this paper, we prove this conjecture. We also prove that the Hecke algebra associated with a \emph{regular supercuspidal type} is isomorphic to the group algebra of a certain abelian group. 
\end{abstract}
\section{Introduction}
Let $F$ be a non-archimedean local field of residue characteristic $p\neq 2$ and $G$ be a connected reductive group over $F$ that splits over a tamely ramified extension of $F$. As explained in \cite{Ber}, the category $\mathcal{R}(G(F))$ of smooth complex representations of $G(F)$ is decomposed into a product $\prod_{[M, \sigma]_{G}} \mathcal{R}^{[M, \sigma]_G}(G(F))$ of full subcategories $\mathcal{R}^{[M, \sigma]_G}(G(F))$, called \emph{Bernstein blocks}. Bernstein blocks are parametrized by inertial equivalence classes $[M, \sigma]_{G}$ of cuspidal pairs. Each block $\mathcal{R}^{[M,\sigma]_G}(G(F))$ is equivalent to the category of modules over an algebra if $[M,\sigma]_G$ has an associated type as explained below. Let $K$ be a compact open subgroup of $G(F)$, $(\rho, W)$ be an irreducible representation of $K$, and $\mathfrak{s}$ be an inertial equivalence class of a cuspidal pair. We say that $(K, \rho)$ is an $\mathfrak{s}$-type if $\mathcal{R}^{\mathfrak{s}}(G(F))$ is precisely the full subcategory of $\mathcal{R}(G(F))$ consisting of smooth representations which are generated by their $\rho$-isotypic components. In this case, $\mathcal{R}^{\mathfrak{s}}(G(F))$ is equivalent to the category of modules over the Hecke algebra $\mathcal{H}(G, \rho)$ associated with $(K, \rho)$ \cite[Theorem~4.3]{BK2}. Therefore, to construct types and determine the structure of Hecke algebras associated with the types are essential to understand the category $\mathcal{R}(G(F))$. 

In \cite{MP1} and \cite{MP2}, Moy and Prasad defined the notion of \emph{depth} of types and constructed \emph{depth-zero types}.
Hecke algebras associated with depth-zero types were calculated in \cite{Mo}. In \cite{Mo}, Morris gave the generators and relations for Hecke algebras associated with depth-zero types \cite[Theorem~7.12]{Mo}.

In \cite{Yu}, Yu constructed types of general depth which are called \emph{tame supercuspidal types}. His construction starts with a tuple $(\overrightarrow{G}, y, \overrightarrow{r}, {^\circ\!\rho_{-1}}, \overrightarrow{\phi})$, out of which it produces a sequence of types $({^\circ\!K^i}, {^\circ\!\rho_{i}})$ in $G^i(F)$, where $\overrightarrow{G}=\left(G^0\subsetneq G^1 \subsetneq\ldots \subsetneq G^d=G\right)$ is a sequence of twisted Levi subgroups of $G$. Yu conjectured that the Hecke algebras associated with $({^\circ\!K^i}, {^\circ\!\rho_{i}})$ are all isomorphic \cite[Conjecture~0.2]{Yu}. In particular, Hecke algebras associated with these types are isomorphic to Hecke algebras associated with depth-zero types, which are studied in \cite{Mo} as explained above.
In \cite{AM}, Adler and Mishra proved \cite[conjecture~0.2]{Yu} under some conditions \cite[Corollary~6.4]{AM}. However, this result covers only the cases that Hecke algebras are commutative.
In this paper, we prove \cite[Conjecture~0.2]{Yu} without any assumptions. This is the first topic of this paper. 

The second topic of this paper is on \emph{regular supercuspidal types}. In \cite{Kal}, Kaletha defined and constructed a large class of supercuspidal representations which he calls \emph{regular}. Kaletha's construction starts with a regular tame elliptic pair $(S, \theta)$, where $S$ is a tame elliptic maximal torus, and $\theta$ is a character of $S(F)$ which satisfy some conditions \cite[Definition~3.7.5]{Kal}.  As explained in the paragraph following \cite[Definition~3.7.3]{Kal}, ``most'' supercuspidal representations are regular when $p$ is not too small. In this paper, we define and construct \emph{regular supercuspidal types}, which are constructed by the same data $(S, \theta)$ as Kaletha's construction of regular supercuspidal representations. The regular supercuspidal representation constructed by $(S,\theta)$ contains the regular supercuspidal type constructed by the same $(S, \theta)$. We prove that the Hecke algebra associated with the type constructed by $(S, \theta)$ is isomorphic to the group algebra of the quotient of $S(F)$ by the unique maximal compact subgroup. This result is proved independent of \cite[Conjecture~0.2]{Yu} and the work of \cite{Mo}.

We sketch the outline of this paper. In Section \ref{secyu}, we review Yu's construction of supercuspidal representations briefly. In Section \ref{sechecke}, we prove \cite[Conjecture~0.2]{Yu}. In Section \ref{secreg}, we review Kaletha's construction of regular supercuspidal representations, define regular supercuspidal types, and determine the structure of Hecke algebras associated with regular supercuspidal types.
\subsection*{Acknowledgment}
I am deeply grateful to my supervisor Noriyuki Abe for his enormous support and helpful advice. He checked the draft and gave me useful comments. I would also like to thank Jessica Fintzen. She checked the previous draft and gave me a lot of comments. I am supported by the FMSP program at Graduate School of Mathematical Sciences, the University of Tokyo.
\section{Notation and assumptions}
Let $F$ be a non-archimedean local field of residue characteristic $p$, $k_F$ be its residue field, and $G$ be a connected reductive group over $F$ that splits over a tamely ramified extension of $F$.
We denote by $Z(G)$ the center of $G$ and by $G_{\text{der}}$ the derived subgroup of $G$.
We assume that $p$ is an odd prime.

We denote by $\mathcal{B}(G, F)$ the enlarged Bruhat--Tits building of $G$ over $F$.
If $T$ is a maximal, maximally split torus of $G_E := G \times_{F} E$ for a field extension $E$ over $F$, then $\mathcal{A}(T, E)$ denotes the apartment of $T$ inside the Bruhat--Tits building $\mathcal{B}(G_E, E)$ of $G_E$ over $E$.
For any $y\in \mathcal{B}(G,F)$, we denote by $[y]$ the projection of $y$ on the reduced building and by $G(F)_y$ (resp.\,$G(F)_{[y]}$) the subgroup of $G(F)$ fixing $y$ (resp.\,$[y]$).
For $y\in \mathcal{B}(G, F)$ and $r\in \widetilde{\mathbb{R}}_{\ge 0}=\mathbb{R}_{\ge 0}\cup \{r+\mid r\in \mathbb{R}_{\ge 0}\}$, we write $G(F)_{y, r}$ for the Moy--Prasad filtration subgroup of $G(F)$ of depth $r$ \cite{MP1, MP2}.

Suppose that $K$ is a subgroup of $G(F)$ and $g\in G(F)$. We denote $gKg^{-1}$ by $^gK$. If $\rho$ is a smooth representation of $K$, $^g\!\rho$ denotes the representation $x\mapsto \rho(g^{-1}xg)$ of $^gK$. If $\text{Hom}_{K\cap ^gK}(^g\!\rho, \rho)$ is non-zero, we say $g$ intertwines $\rho$.
\section{Review of Yu's construction}
\label{secyu}
In this section, we recall Yu's construction of supercuspidal representations and supercuspidal types of $G(F)$ \cite{Yu}.

An input for Yu's construction of supercuspidal representations of $G(F)$ is a tuple $(\overrightarrow{G}, y, \overrightarrow{r}, \rho_{-1}, \overrightarrow{\phi})$ where
\begin{description}
\item[\bf{D1}]
$\overrightarrow{G}=\left(G^0\subsetneq G^1 \subsetneq\ldots \subsetneq G^d=G\right)$ is a sequence of twisted Levi subgroups of $G$ that split over a tamely ramified extension of $F$, i.\,e., there exists a tamely ramified extension $E$ of $F$ such that $G^i_{E}$ is split for $0\le i\le d$, and $\left(G^0_{E}\subsetneq G^1_{E} \subsetneq\ldots \subsetneq G^d_{E}=G_{E}\right)$ is a split Levi sequence in $G_{E}$ in the sense of \cite[Section~1]{Yu}; we assume that $Z(G^0)/Z(G)$ is anisotropic;
\item[\bf{D2}]
$y$ is a point in $\mathcal{B}(G^0, F)\cap \mathcal{A}(T, E)$ whose projection on the reduced building of $G^0(F)$ is a vertex, where $T$ is a maximal torus of $G^0$ (hence of $G^i$) whose splitting field $E$ is a tamely ramified extension of $F$; we denote by $\Phi(G^{i}, T, E)$ the corresponding root system of $G^i$ for $0\le i \le d$;
\item[\bf{D3}]
$\overrightarrow{r}=\left(r_0, \ldots , r_d\right)$ is a sequence of real numbers satisfying
\begin{align*}
\begin{cases}
0<r_0<r_1<\cdots <r_{d-1}\le r_{d} & (d>0),\\
0\le r_0 & (d=0);
\end{cases}
\end{align*}
\item[\bf{D4}]
$\rho_{-1}$ is an irreducible representation of $G^0(F)_{[y]}$ such that $\rho_{-1}\restriction_{G^0(F)_{y, 0}}$ is the inflation of a cuspidal representation of $G^0(F)_{y,0}/ G^0(F)_{y, 0+}$;
\item[\bf{D5}]
$\overrightarrow{\phi}=\left(\phi_0, \ldots , \phi_d\right)$ is a sequence of characters, where $\phi_i$ is a character of $G^i(F)$; we assume that $\phi_i$ is trivial on $G^i(F)_{y, r_i+}$ but non-trivial on $G^i(F)_{y, r_i}$ for $0\le i\le d-1$. If $r_{d-1}<r_d$, we assume that $\phi_d$ is trivial on $G^d(F)_{y, r_d+}$ but non-trivial on $G^d(F)_{y, r_d}$, otherwise we assume that $\phi_d=1$. Moreover, we assume that $\phi_i$ is $G^{i+1}$-generic of depth $r_i$ relative to $y$ in the sense of \cite[Section.~9]{Yu} for $0\le i\le d-1$. 
\end{description}

Using the datum, we define
\[
\left\{
\begin{aligned}
K^i&=G^0(F)_{[y]}G^{1}(F)_{y, r_0/2}\cdots G^i(F)_{y, r_{i-1}/2},\\
K^i_{+}&=G^0(F)_{y, 0+}G^1(F)_{y, (r_0/2)+}\cdots G^i(F)_{y, (r_{i-1}/2)+}
\end{aligned}
\right.
\]
for $0\le i\le d$.
We also define subgroups $J^i, J^i_{+}$ of $G$ for $1\le i\le d$ as follows. 
For $\alpha\in \Phi(G, T, E)$, let $U_{\alpha}=U_{T, \alpha}$ denote the root subgroup of $G$ corresponding to $\alpha$. We set $U_{0}=T$. For $x\in \mathcal{B}(G, F)$, $\alpha\in \Phi(G, T, E)\cup\{0\}$, and $r\in \widetilde{\mathbb{R}}_{\ge 0}$, let $U_{\alpha}(E)_{x, r}$ denote the Moy--Prasad filtration subgroup of $U_{\alpha}(E)$ of depth $r$ \cite{MP1, MP2}.
We define
\[
\left\{
\begin{aligned}
J^i&= G(F)\cap \langle U_{\alpha}(E)_{y, r_{i-1}}, U_{\beta}(E)_{y, r_{i-1}/2} \mid \alpha\in \Phi(G^{i-1}, T, E)\cup\{0\}, \beta\in \Phi(G^i, T, E)\backslash \Phi(G^{i-1}, T, E) \rangle,\\
J^i_{+}&= G(F)\cap \langle U_{\alpha}(E)_{y, r_{i-1}}, U_{\beta}(E)_{y, (r_{i-1}/2)+} \mid \alpha\in \Phi(G^{i-1}, T, E)\cup\{0\}, \beta\in \Phi(G^i, T, E)\backslash \Phi(G^{i-1}, T, E) \rangle
\end{aligned}
\right.
\]
for $1\le i\le d$.
As explained in \cite[Section~1]{Yu}, $J^i$ and $J^i_{+}$ are independent of the choice of a maximal torus $T$ of $G^0$ so that $T$ splits over a tamely ramified extension $E$ of $F$ and $y\in \mathcal{A}(T, E)$.

Yu constructed irreducible representations $\rho_i$ and $\rho'_i$ of $K^i$ for $0\le i\le d$ inductively. First, we put $\rho'_0=\rho_{-1}, \rho_0=\rho'_0\otimes \phi_0$.

Suppose that $\rho_{i-1}$ and $\rho_{i-1}'$ are already constructed, and $\rho'_{i-1}\restriction_{G^{i-1}(G)_{y, r_{i-1}}}$ is $1$-isotypic. In \cite[Section~11]{Yu}, Yu defined  a representation $\phi'_{i-1}$ of $K^i$ using the theory of Weil representation. This representation only depends on $\phi_{i-1}$. If $r_{i-1}<r_i$, $\phi'_{i-1}\restriction_{G^i(F)_{y, r_i}}$ is $1$-isotypic. Let $\text{inf}\left(\rho'_{i-1}\right)$ be the inflation of $\rho'_{i-1}$ via the map $K^i=K^{i-1}J^i\to K^{i-1}J^i/J^i\simeq K^{i-1}/G^{i-1}(G)_{y, r_{i-1}}$. Now we define $\rho'_i=\text{inf}\left(\rho'_{i-1}\right)\otimes \phi'_{i-1}$, which is trivial on $G^i(F)_{y, r_i}$ if $r_{i-1}<r_i$. Finally, we define $\rho_i=\rho'_i\otimes \phi_{i}$.

We explain the construction of $\phi'_{i-1}$. Let $\langle \cdot, \cdot \rangle_{i}$ be a pairing on $J^{i}/J^{i}_{+}$ defined by $\langle a, b \rangle_i =\hat{\phi}_{i-1}(aba^{-1}b^{-1})$. Here, $\hat{\phi}_{i-1}$ denotes an extension of $\phi_{i-1}\restriction_{K^0G^{i-1}(F)_{y, 0}}$ to $K^0 G^{i-1}(F)_{y, 0} G(F)_{y, (r_{i-1}/2)+}$ defined in \cite[Section~4]{Yu}.
The pairing is well-defined because by \cite[Proposition~6.4.44]{BT}, $[J^{i}, J^{i}]$ is contained in $J^{i}_{+}$.
Note that since the order of every element in $J^i/J^{i}_{+}$ divides $p$, we can regard $J^i/J^i_{+}$ as a $\mathbb{F}_p$-vector space.
By \cite[Lemma~11.1]{Yu}, this pairing is non-degenerate on $J^i/J^i_+$.
In addition, by the construction of $\hat{\phi}_{i-1}$, for $j\in J^{i}_{+}$, $j^{p}$ is contained in $\text{Ker}(\hat{\phi}_{i-1})$. 
Therefore, the order of every element in $\hat{\phi}_{i-1}(J^i_{+})$ divides $p$, and since $\hat{\phi}_{i-1}(J^{i}_{+})$ is a non-trivial subgroup of $\mathbb{C}^{\times}$, this implies that $\hat{\phi}_{i-1}(J^{i}_{+})$ is isomorphic to $\mathbb{F}_{p}$.
Hence we can regard $\langle \cdot, \cdot \rangle_{i}$ as a non-degenerate $\mathbb{F}_p$-valued pairing on $J^{i}/J^{i}_{+}$ and $J^{i}/J^{i}_{+}$ as a symplectic space over $\mathbb{F}_p$. For a symplectic space $\left(V, \langle , \rangle\right)$ over $\mathbb{F}_p$, we define the Heisenberg group $V^{\#}$ of $V$ to be the set $V\times \mathbb{F}_{p}$ with the group law $(v, a)(w, b)=(v+w, a+b+\frac{1}{2}\langle v, w \rangle)$.
Yu constructed a canonical isomorphism $j\colon J^{i}/\left(J^{i}_{+}\cap \text{Ker}(\hat{\phi}_{i-1})\right)\to (J^i/J^i_{+})\times J^i_{+}/\left(J^{i}_{+} \cap \text{Ker}(\hat{\phi}_{i-1})\right)\simeq(J^i/J^i_+)^{\#}$ in \cite[Proposition~11.4]{Yu}. Combining this isomorphism and the map $K^{i-1}\to \text{Sp}\left(J^{i}/J^{i}_{+}\right)$ induced by the conjugation, we define $K^{i-1}\ltimes J^{i}\to K^{i-1}\ltimes \left(J^{i}/\left(J^{i}_{+}\cap \text{Ker}(\hat{\phi}_{i-1})\right)\right)\to \text{Sp}\left(J^{i}/J^{i}_{+}\right)\ltimes (J^{i}/J^{i}_{+})^{\#}$. Combining this map and the Weil representation of $\text{Sp}\left(J^{i}/J^{i}_{+}\right)\ltimes (J^{i}/J^{i}_{+})^{\#}$ associated with the central character $\hat{\phi}_{i-1}$, we construct a representation $\tilde{\phi}_{i-1}$ of $K^{i-1}\ltimes J^{i}$. Let $\text{inf}(\phi_{i-1})$ be the inflation of $\phi_{i-1}$ via the map $K^{i-1} \ltimes J^{i} \to K^{i-1}$, then $\text{inf}(\phi_{i-1})\otimes \tilde{\phi}_{i-1}$ factors through the map $K^{i-1}\ltimes J^i\to K^{i-1}J^{i}=K^{i}$. We define $\phi'_{i-1}$ be the representation of $K^i$ whose inflation to $K^{i-1}\ltimes J^i$ is $\text{inf}(\phi_{i-1})\otimes \tilde{\phi}_{i-1}$. 

For open subgroups $K_1, K_2$ of $G^i(F)$ and a representation $\rho$ of $K_1$, let $\text{ind}_{K_1}^{K_2} \rho$ denote the compactly induced representation of $K_2$.
\begin{theorem}[{\cite[Theorem~15.1]{Yu}}]
The representation $\pi_i=\text{ind}_{K^i}^{G^i(F)} \rho_i$ of $G^i(F)$ is an irreducible supercuspidal representation of depth $r_i$ for $0\le i\le d$.
\end{theorem}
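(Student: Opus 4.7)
The plan is to prove the theorem by induction on $i$, applying the standard Mackey/intertwining criterion: if $K$ is open in $G(F)$ and compact modulo $Z(G)(F)$, if $\rho$ is an irreducible representation of $K$, and if the intertwining set $I_{G(F)}(\rho) = \{g \in G(F) \mid \text{Hom}_{K \cap {}^gK}({}^g\rho, \rho) \neq 0\}$ equals $K$, then $\text{ind}_K^G \rho$ is irreducible and supercuspidal. So for each $i$ I would verify three things: (a) $K^i$ is open in $G^i(F)$ and compact modulo $Z(G^i)(F)$; (b) $\rho_i$ is irreducible on $K^i$; (c) $I_{G^i(F)}(\rho_i) = K^i$. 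The depth statement then follows because $\rho_i$ is trivial on $G^i(F)_{y, r_i+}$ but non-trivial on $G^i(F)_{y, r_i}$, together with conjugacy of parahoric subgroups under $G^i(F)$.

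For the base case $i = 0$, $K^0 = G^0(F)_{[y]}$ is open and compact modulo $Z(G^0)(F)$ because $[y]$ is a vertex of the reduced building and $Z(G^0)/Z(G)$ is anisotropic, so that $Z(G^0)(F)/Z(G)(F)$ is compact. Irreducibility of $\rho_0 = \rho_{-1} \otimes \phi_0$ is immediate from (D4). The intertwining claim reduces to the well-known Moy--Prasad fact that an element of $G^0(F)$ intertwining the inflation of a cuspidal representation of $G^0(F)_{y,0}/G^0(F)_{y,0+}$ must normalize the parahoric and hence lie in $G^0(F)_{[y]}$; the character twist by $\phi_0$ preserves intertwining.

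For the inductive step, I would write $\rho_i = \text{inf}(\rho_{i-1}) \otimes \phi'_{i-1} \otimes \phi_i$ on $K^i = K^{i-1}J^i$. Compactness of $K^i$ modulo $Z(G^i)(F)$ follows from the compactness of the Moy--Prasad pieces $G^j(F)_{y, r_{j-1}/2}$ together with $K^{i-1}$ being compact modulo $Z(G^{i-1})(F) \supset Z(G^i)(F)$, a standard fact for twisted Levi sequences. Irreducibility of $\rho_i$ reduces to irreducibility of the Heisenberg--Weil component $\phi'_{i-1}$, which holds because, by Stone--von Neumann, the irreducible representation of the Heisenberg group $J^i/(J^i_+ \cap \text{Ker}(\hat\phi_{i-1}))$ with central character $\hat\phi_{i-1}$ is unique. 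The heart of the proof is the intertwining identity (c): any $g \in G^i(F)$ intertwining $\rho_i$ lies in $K^i$. Following Yu, I would first exploit the non-degeneracy of the symplectic pairing on $J^i/J^i_+$ together with the $G^i$-genericity of $\phi_{i-1}$ (condition D5) to force $g \in K^i \cdot G^{i-1}(F) \cdot K^i$, and then appeal to the inductive hypothesis $I_{G^{i-1}(F)}(\rho_{i-1}) = K^{i-1}$ to collapse this double coset back into $K^i$.

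The main obstacle is this genericity-based intertwining bound. Concretely, given $g$ intertwining $\rho_i$, one must simultaneously track how conjugation by $g$ affects the inflation of $\rho_{i-1}$, the Weil lift $\phi'_{i-1}$, and the character $\phi_i$. The $G^i$-genericity of $\phi_{i-1}$ is engineered precisely so that the resulting identity forces $g$ into a position where the Heisenberg--Weil pieces all intertwine trivially modulo $K^i$, leaving only an intertwining contribution coming from $\rho_{i-1}$ on $G^{i-1}(F)$. This rests on a Bruhat-type decomposition of $G^i(F)$ relative to the twisted Levi $G^{i-1}$, together with the refined structural properties of generic characters from \cite[Section~9]{Yu}. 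Once this is in place, properties (a)--(c) combine with the Mackey criterion to yield irreducibility and supercuspidality of $\pi_i$, and the depth follows from the non-triviality of $\phi_i$ on $G^i(F)_{y, r_i}$ (or, when $r_{i-1} = r_d$ and $\phi_d = 1$, from the depth of $\rho_{i-1}$).
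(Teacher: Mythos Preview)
Your outline reproduces Yu's original argument, and the paper does not supply its own proof here: it cites \cite[Theorem~15.1]{Yu} and immediately warns that Yu's proof relies on \cite[Proposition~14.1]{Yu} and \cite[Theorem~14.2]{Yu}, which Fintzen has shown to be false, and that the theorem is rescued by the alternative argument in \cite{Fin1}. So the relevant comparison is between your sketch and Yu's strategy as corrected by Fintzen.

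The genuine gap in your proposal sits exactly where you flag ``the main obstacle.'' After reducing to $g\in K^i G^{i-1}(F) K^i$ (this part, corresponding to \cite[Propositions~4.1, 4.4]{Yu}, is fine), you assert that for $g\in G^{i-1}(F)$ the Heisenberg--Weil pieces $\phi'_{i-1}$ ``intertwine trivially modulo $K^i$,'' so that intertwining of $\rho_i$ collapses to intertwining of $\rho_{i-1}$. That collapse is precisely the content of \cite[Theorem~14.2]{Yu}, whose proof depends on an equivariance claim for Yu's special isomorphism $j\colon J^i/(J^i_+\cap\mathrm{Ker}\,\hat\phi_{i-1})\to (J^i/J^i_+)^\#$ (this is \cite[Proposition~14.1]{Yu}). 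Fintzen observed that $j$ is \emph{not} equivariant for the conjugation action of the relevant elements, so the Weil piece cannot be peeled off in the way you describe, and the clean tensor decoupling you invoke does not hold.

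Fintzen's repair, which the paper adopts later when proving its Proposition~\ref{support}, avoids separating the tensor factors altogether. Instead one works directly with the full representation, restricts to carefully chosen subgroups $U_i$ built from root subgroups with $\alpha(y-g\!\cdot\!y)<0$, and compares two computations of how $U_i$ acts on the image of a nonzero intertwiner: one via the Heisenberg action inside $^\circ\!\rho$, the other via ${}^g\theta_d$. This forces the image into $\bigotimes V_i^{U_i}$ and eventually contradicts cuspidality unless $g$ lies where it should. If you want a self-contained proof, this is the mechanism you need to substitute for your ``Heisenberg--Weil pieces intertwine trivially'' step.
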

For the proof of this theorem, Yu uses \cite[Proposition~14.1]{Yu} and \cite[Theorem~14.2]{Yu}, which are pointed out in \cite{Fin1} to be false. However, Fintzen uses an alternative approach in \cite{Fin1} and proves \cite[Theorem~15.1]{Yu} without using \cite[Proposition~14.1]{Yu} or \cite[Theorem~14.2]{Yu}.

Next, we review Yu's construction of supercuspidal types.
We start with a datum  $(\overrightarrow{G}, y, \overrightarrow{r}, {^{\circ}\!\rho_{-1}}, \overrightarrow{\phi})$ satisfying {\bf D1}, {\bf D2}, {\bf D3}, {\bf D5} and $^\circ${\bf D4}:
\begin{description}
\item[$^\circ${\bf D4}]
${^\circ\!\rho_{-1}}$ is  an irreducible representation of $G^0(F)_{y}$ such that ${^{\circ}\!\rho_{-1}}\restriction_{G^0(F)_{y, 0}}$ is the inflation of a cuspidal representation of $G^0(F)_{y,0}/ G^0(F)_{y, 0+}$,
\end{description}
instead of {\bf D4}.
We then follow the above construction replacing $K^i$ with
\[^{\circ}\!K^i=G^0(F)_{y}G^1(F)_{y, r_0/2}\cdots G^i(F)_{y, {r_{i-1}}/2},\]
and construct an irreducible representation $^\circ\!\rho_i$ of $^\circ\!K^i$.
\begin{proposition}
\label{type}
Let $(\overrightarrow{G}, y, \overrightarrow{r}, \rho_{-1}, \overrightarrow{\phi})$ be a datum satisfying {\bf D1}, {\bf D2}, {\bf D3}, {\bf D4}, {\bf D5} and ${^{\circ}\!\rho_{-1}}$ be an irreducible representation of $G^0(F)_y$ which is contained in $\rho_{-1}\restriction_{G^0(F)_y}$. Then $(\overrightarrow{G}, y, \overrightarrow{r}, {^\circ\!\rho_{-1}}, \overrightarrow{\phi})$ satisfies {\bf D1}, {\bf D2}, {\bf D3}, $^\circ${\bf D4}, {\bf D5} and the representation $^\circ\!\rho_i$ constructed above is an $\mathfrak{s}_i$- type in the sense of \cite{BK2}, where $\mathfrak{s}_i$ is the inertial equivalence class of $[G^i, \pi_i]_{G^i}$.
\end{proposition}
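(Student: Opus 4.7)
The plan is to split the proposition into two parts: verifying that the modified datum satisfies the replacement axioms, and then establishing the type property, the latter being the substantive content.

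For the axioms, \textbf{D1}, \textbf{D2}, \textbf{D3}, \textbf{D5} involve only the data $(\overrightarrow{G}, y, \overrightarrow{r}, \overrightarrow{\phi})$, which is unchanged, so they persist. For $^\circ$\textbf{D4}, I would use that $G^0(F)_{y,0}$ is normal in $G^0(F)_y$: then $^\circ\!\rho_{-1}|_{G^0(F)_{y,0}}$ is a subrepresentation of $\rho_{-1}|_{G^0(F)_{y,0}}$, which by hypothesis inflates a cuspidal representation of $G^0(F)_{y,0}/G^0(F)_{y,0+}$. Since cuspidality is preserved under taking direct summands of a semisimple representation of a finite reductive group, $^\circ\!\rho_{-1}|_{G^0(F)_{y,0}}$ also inflates a cuspidal representation, verifying $^\circ$\textbf{D4}.

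For the type property, my plan is to invoke the criterion of \cite[Proposition~5.4]{BK2}: for the supercuspidal inertial class $\mathfrak{s}_i$, the pair $(^\circ\!K^i, {^\circ\!\rho_i})$ is an $\mathfrak{s}_i$-type precisely when $^\circ\!\rho_i$ occurs in some $\pi' \in \mathfrak{s}_i$ and every irreducible smooth $G^i(F)$-representation whose restriction to $^\circ\!K^i$ contains $^\circ\!\rho_i$ lies in $\mathfrak{s}_i$. The first condition can be checked by induction on $i$: since the recipes for $\rho_j$ and $^\circ\!\rho_j$ use the same auxiliary data $\phi'_{j-1}$ and $\phi_j$, the initial inclusion $^\circ\!\rho_{-1} \hookrightarrow \rho_{-1}|_{G^0(F)_y}$ propagates to $^\circ\!\rho_i \hookrightarrow \rho_i|_{^\circ\!K^i}$, and Frobenius reciprocity applied to $\pi_i = \text{ind}_{K^i}^{G^i(F)} \rho_i$ then gives $^\circ\!\rho_i \subseteq \pi_i|_{^\circ\!K^i}$. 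For the second condition, note $K^i = G^0(F)_{[y]} \cdot {^\circ\!K^i}$ with $K^i/{^\circ\!K^i} \cong G^0(F)_{[y]}/G^0(F)_y$, a finitely generated abelian group essentially arising from $Z(G^0)(F)$; by Mackey, $\text{ind}_{^\circ\!K^i}^{K^i}{^\circ\!\rho_i}$ decomposes as a finite direct sum whose further induction to $G^i(F)$ is a sum of unramified twists of $\pi_i$, all lying in $\mathfrak{s}_i$.

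The main obstacle is controlling the $G^i(F)$-intertwining of $^\circ\!\rho_i$. For $\rho_i$ itself, $I_{G^i(F)}(\rho_i) = K^i$ is equivalent to the irreducibility of $\pi_i$, which follows from \cite[Theorem~15.1]{Yu} via the corrected argument of \cite{Fin1}. Passing to $^\circ\!\rho_i$ one must rule out extra intertwiners arising both from the smaller ambient group $^\circ\!K^i$ and from the possibly smaller constituent $^\circ\!\rho_{-1}$; the goal is to show $I_{G^i(F)}({^\circ\!\rho_i}) \subseteq {^\circ\!K^i} \cdot Z(G^i)(F)$. By the inductive structure of Yu's construction this should reduce to the analogous depth-zero statement for $(G^0(F)_y, {^\circ\!\rho_{-1}})$, which falls within the depth-zero type theory of \cite{Mo}. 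Once this intertwining bound is established, the second condition of the Bushnell--Kutzko criterion follows by the standard argument that unramified twists of $\pi_i$ exhaust the inertial class $\mathfrak{s}_i$, and the type property is proved.
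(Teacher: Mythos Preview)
Your proposal cites the right source, \cite[Proposition~5.4]{BK2}, but misstates its content and consequently does far more work than necessary. That proposition is not a general two-condition criterion; it says directly that if $\pi = \text{ind}_{J}^{G}\Lambda$ is irreducible supercuspidal with $J$ open and compact-mod-center, and $K$ is the unique maximal compact subgroup of $J$, then \emph{any} irreducible constituent of $\Lambda|_K$ is a $[G,\pi]_G$-type. The paper's proof is therefore two observations and a citation: $^\circ\!K^i$ is the unique maximal compact subgroup of $K^i$, and $^\circ\!\rho_i$ is contained in $\rho_i|_{^\circ\!K^i}$ (the latter following, as you note, by propagating the inclusion $^\circ\!\rho_{-1}\hookrightarrow \rho_{-1}|_{G^0(F)_y}$ through the identical inductive recipe). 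The intertwining analysis you worry about is exactly what Bushnell--Kutzko carry out inside their proof of Proposition~5.4; there is no need to redo it.

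Two of your intermediate claims are also off. First, $K^i/{^\circ\!K^i}\cong G^0(F)_{[y]}/G^0(F)_y$ is a lattice whose rank equals the $F$-split rank of $Z(G^0)$, which is typically positive; so $\text{ind}_{^\circ\!K^i}^{K^i}{^\circ\!\rho_i}$ is not a finite direct sum. Second, the bound $I_{G^i(F)}({^\circ\!\rho_i})\subseteq {^\circ\!K^i}\cdot Z(G^i)(F)$ is generally false: the intertwining set contains $G^0(F)_{[y]}$ (indeed the paper later shows the support of the Hecke algebra is ${^\circ\!K}G^0(F)_{[y]}{^\circ\!K}$), and $G^0(F)_{[y]}$ is not contained in ${^\circ\!K^i}\cdot Z(G^i)(F)$ once $i>0$, since $Z(G^0)\supsetneq Z(G^i)$. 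Fortunately none of this is needed: once you recall the correct statement of \cite[Proposition~5.4]{BK2}, the proof collapses to the paper's two-line argument.
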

\begin{proof}
Note that $^\circ\!K^i$ is the unique maximal compact subgroup of $K^i$, and $^\circ\!\rho_i$ is contained in $\rho_i\restriction_{^\circ\!K^i}$. Then this proposition follows from \cite[Proposition~5.4]{BK2}.
\end{proof}
Types obtained in this way are called \emph{tame supercuspidal types}.
In the following, we write $K^d, K^d_{+}, {^\circ\!K^{d}}, \rho_{d}, {^{\circ}\!\rho_{d}}, \pi_{d}$ simply by $K, K_{+}, {^\circ\!K}, \rho, {^\circ\!\rho}, \pi$, respectively.
\section{Hecke algebra isomorphism}
\label{sechecke}
Let $K$ be a compact open subgroup of $G(F)$ and $(\rho, W)$ be an irreducible representation of $K$. We define a Hecke algebra $\mathcal{H}(G(F), \rho)$ associated with $(K, \rho)$ as in \cite[Section~2]{BK2} and write $\check{\mathcal{H}}(G(F), \rho)$ for $\mathcal{H}(G(F), \check{\rho})$, where  $\check{\rho}$ is the contragradient of $\rho$. So, $\check{\mathcal{H}}(G(F), \rho)$ is the $\mathbb{C}$-vector space of compactly supported functions $\Phi: G(F)\to \text{End}_{\mathbb{C}}(W)$ satisfying
\[\Phi(k_1 g k_2)=\rho(k_1)\circ \Phi(g) \circ \rho(k_2), \ k_i\in K, g\in G(F),\]
and for $\Phi_1, \Phi_2\in \check{\mathcal{H}}(G(F), \rho)$, the product $\Phi_1*\Phi_2$ is defined by
\[\left(\Phi_1*\Phi_2\right)(x)=\int_{G(F)} \Phi_1(y)\circ\Phi_2(y^{-1}x) dy.\]
Here, we normalize the Haar measure of $G(F)$ so that $\text{vol}(K)=1$.
 If $\mathfrak{s}$ is an inertial equivalence class of a cuspidal pair and $(K, \rho)$ is an $\mathfrak{s}$-type, the Bernstein block associated with $\mathfrak{s}$ is equivalent to the category of $\mathcal{H}(G(F), \rho)$-modules \cite[Theorem~4.3]{BK2}.
We restrict our attention to Hecke algebras associated  with tame supercuspidal types, defined in Section \ref{secyu}.

Let $(\overrightarrow{G}, y, \overrightarrow{r}, \rho_{-1}, \overrightarrow{\phi})$ be a datum satisfying {\bf D1}, {\bf D2}, {\bf D3}, {\bf D4}, {\bf D5} and $^{\circ}\!\rho_{-1}$ be an irreducible representation of $G^0(F)_y$ which is contained in $\rho_{-1}\restriction_{G^0(F)_y}$. We construct a $[G, \pi]_G$-type $\left({^\circ\!K}, {^\circ\!\rho}\right)$ as in Section \ref{secyu}.

We define
\[\text{supp}\left(\check{\mathcal{H}}\left(G(F), {^\circ\!\rho}\right)\right)=\bigcup_{f\in \check{\mathcal{H}}\left(G(F), {^\circ\!\rho}\right)} \text{supp}(f),\]
where $\text{supp}(f)$ denotes the support of $f$.
We call it \emph{the support of $\check{\mathcal{H}}\left(G(F), {^\circ\!\rho}\right)$}. Note that
\[\text{supp}\left(\check{\mathcal{H}}\left(G(F), {^\circ\!\rho}\right)\right)=\{g\in G(F)\mid g \ \text{intertwines} \ ^\circ\!\rho\}.\]
\begin{proposition}
\label{support}
The support of $\check{\mathcal{H}}\left(G(F), {^\circ\!\rho}\right)$ is contained in ${^\circ\!K} G^0(F)_{[y]} {^\circ\!K}$. Moreover, an element $g\in G^0(F)_{[y]}$ intertwines $^\circ\!\rho$ if and only if $g$ intertwines $^\circ\!\rho_{-1}$.
\end{proposition}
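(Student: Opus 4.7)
The plan is to reduce both parts of the proposition to Yu's intertwining calculation for the (non-compact) type $(K, \rho)$, exploiting the Clifford-theoretic relation between ${^\circ\!\rho}$ and $\rho$. Two structural inputs are central. First, for $i\ge 1$ the Moy--Prasad subgroup $G^i(F)_{y, r_{i-1}/2}$ depends only on the reduced projection $[y]$, and $G^0(F)_y$ is normal in $G^0(F)_{[y]}$, so $G^0(F)_{[y]}$ normalizes ${^\circ\!K}$; consequently $K = {^\circ\!K}\cdot G^0(F)_{[y]}$ and
\[
{^\circ\!K}\cdot G^0(F)_{[y]}\cdot{^\circ\!K} = K\cdot G^0(F)_{[y]}\cdot K.
\]
Second, since ${^\circ\!\rho_{-1}}$ is an irreducible subrepresentation of $\rho_{-1}\restriction_{G^0(F)_y}$, Clifford theory yields a description $\rho \simeq \text{ind}_I^K\tau$ for a subgroup ${^\circ\!K}\subseteq I\subseteq K$ (the stabilizer of ${^\circ\!\rho}$) and an extension $\tau$ of ${^\circ\!\rho}$ to $I$.

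For the first assertion, suppose $g$ intertwines ${^\circ\!\rho}$. Composing a nonzero $({^\circ\!K}\cap{}^g{^\circ\!K})$-equivariant map ${}^g{^\circ\!\rho}\to{^\circ\!\rho}$ with the inclusion ${^\circ\!\rho}\hookrightarrow \rho\restriction_{{^\circ\!K}}$ and with the Clifford projection $\rho\restriction_{{^\circ\!K}}\twoheadrightarrow{^\circ\!\rho}$, one obtains a nonzero $({^\circ\!K}\cap{}^g{^\circ\!K})$-equivariant map ${}^g\rho\to\rho$. Frobenius reciprocity against $\rho \simeq \text{ind}_I^K\tau$ together with a Mackey decomposition on $K\backslash G(F)/K$ then upgrades this, after modifying $g$ by elements of ${^\circ\!K}$ on each side, to a genuine $(K\cap{}^{g'}K)$-intertwiner of $\rho$. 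The support bound for $\check{\mathcal{H}}(G(F),\rho)$ coming from Yu's construction (with the corrections of \cite{Fin1}) then places $g'\in K\cdot G^0(F)_{[y]}\cdot K$, whence $g\in{^\circ\!K}\cdot G^0(F)_{[y]}\cdot{^\circ\!K}$.

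For the second assertion, fix $g\in G^0(F)_{[y]}$. Since $g$ normalizes ${^\circ\!K}$, the intertwining condition reduces to ${}^g{^\circ\!\rho}\simeq{^\circ\!\rho}$ as ${^\circ\!K}$-representations. I would descend along the iterative construction ${^\circ\!\rho_i} = \text{inf}({^\circ\!\rho_{i-1}})\otimes\phi'_{i-1}\otimes\phi_i$. Since $\phi_i$ is a character of $G^i(F)$, ${}^g\phi_i = \phi_i$; since $\phi'_{i-1}$ is canonically built from $\phi_{i-1}$ and the conjugation action of $g\in G^0(F)\subseteq G^{i-1}(F)$ fixes $\phi_{i-1}$ and preserves the symplectic form on $J^i/J^i_+$, one obtains ${}^g\phi'_{i-1}\simeq\phi'_{i-1}$. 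Consequently ${}^g{^\circ\!\rho_i}\simeq{^\circ\!\rho_i}$ if and only if ${}^g{^\circ\!\rho_{i-1}}\simeq{^\circ\!\rho_{i-1}}$; iterating through $i = d, d-1, \ldots, 0$ collapses the statement to the claim that $g$ intertwines ${^\circ\!\rho_{-1}}$.

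The principal obstacle is the lifting step in the first assertion: transferring a ${^\circ\!K}$-intertwining of the subrepresentation ${^\circ\!\rho}$ to a $K$-intertwining of the ambient $\rho$, since enlarging the subgroup over which $\text{Hom}$ is taken can a priori destroy non-vanishing. The Clifford-inducing description $\rho\simeq\text{ind}_I^K\tau$ combined with Frobenius reciprocity is intended to bridge this gap, so that the known support constraint for $\rho$ descends to one for ${^\circ\!\rho}$.
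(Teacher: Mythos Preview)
Your approach diverges substantially from the paper's, and both parts contain genuine gaps.

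\textbf{First assertion.} You try to reduce to the known intertwining bound for $(K,\rho)$, while the paper never passes through $\rho$ at all: it applies Fintzen's technique \emph{directly} to ${^\circ\!\rho}$. After first citing \cite[Prop.~4.1, 4.4]{Yu} to get $g\in{^\circ\!K}G^0(F){^\circ\!K}$, the paper takes $g\in G^0(F)$ and carries out an explicit analysis of the tensor decomposition ${^\circ\!\rho}=\bigotimes_{i=-1}^d V_i$, computing how the groups $U_i$ act on each factor and on the image $V_f$ of an intertwiner, ultimately forcing $V_{-1}^{U'_{-1}}\neq 0$, which contradicts cuspidality of ${^\circ\!\rho_{-1}}$ unless $g\in G^0(F)_{[y]}$.

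Your proposed shortcut hinges entirely on the lifting step you yourself flag as the ``principal obstacle'': from $\mathrm{Hom}_{{^\circ\!K}\cap{}^g{^\circ\!K}}({}^g{^\circ\!\rho},{^\circ\!\rho})\neq 0$ you need $\mathrm{Hom}_{K\cap{}^{g'}K}({}^{g'}\rho,\rho)\neq 0$ for some $g'\in KgK$. Your sketch (``Frobenius reciprocity together with a Mackey decomposition'') does not do this. Composing with inclusion/projection only yields a nonzero map over the \emph{smaller} group ${^\circ\!K}\cap{}^g{^\circ\!K}$, and enlarging to $K\cap{}^gK$ can kill it; there is no averaging or reciprocity that repairs this in general. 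Note also that $K/{^\circ\!K}\simeq G^0(F)_{[y]}/G^0(F)_y$ is typically an infinite lattice, not a finite group, so Clifford-theoretic arguments must be handled with more care than indicated, and the easy case $K=Z\cdot{^\circ\!K}$ with $Z$ central (where the lifting is immediate) is not what occurs here.

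\textbf{Second assertion.} The forward implication is fine and matches the paper. For the converse you assert that ${}^g\phi'_{i-1}\simeq\phi'_{i-1}$ lets you \emph{cancel} the Weil factor, concluding ${}^g{^\circ\!\rho_i}\simeq{^\circ\!\rho_i}\Leftrightarrow{}^g{^\circ\!\rho_{i-1}}\simeq{^\circ\!\rho_{i-1}}$. But for representations, $A\otimes C\simeq B\otimes C$ does \emph{not} imply $A\simeq B$ even when $C$ is irreducible (e.g.\ on $S_3$ take $A=\mathrm{triv}$, $B=\mathrm{sgn}$, $C$ the $2$-dimensional irreducible). The paper avoids this trap: it writes an intertwiner $f=\sum_j f'_j\otimes f''_j$, restricts to $J^d$ (which acts trivially on the first tensor factor) to force each $f''_j\in\mathrm{Hom}_{J^d}({}^g\phi'_{d-1},\phi'_{d-1})$, invokes \cite[Prop.~12.3]{Yu} to see this space is one-dimensional, and then uses $g\in K$ to identify it with $\mathrm{Hom}_K({}^g\phi'_{d-1},\phi'_{d-1})$. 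Only after this does the factor split off and the induction proceed. Your argument needs this one-dimensionality input (or an equivalent) to be valid.
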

\begin{remark}
\label{remsup}
By \cite[Remark~3.5]{Yu}, $G^0(F)_{[y]}$ normalizes $^\circ\!K$, so ${^\circ\!K} G^0(F)_{[y]} {^\circ\!K}=G^0(F)_{[y]} {^\circ\!K}$.
\end{remark}
Proposition~\ref{support} follows easily from \cite[Corollary~15.5]{Yu}. However \cite[Corollary~15.5]{Yu} relies on \cite[Proposition~14.1]{Yu} and \cite[Theorem~14.2]{Yu}, which are pointed out in \cite{Fin1} to be false. In the following, we prove Proposition~\ref{support} using an argument by Fintzen in \cite[Theorem~3.1]{Fin1}, which does not rely on \cite[Proposition~14.1]{Yu} or \cite[Theorem~14.2]{Yu}.

For the first claim, it is enough to show that if $g\in G(F)$ intertwines $^\circ\!\rho$, then $g\in {^\circ\!K}G^0(F)_{[y]} {^\circ\!K}$.
The first step is the following Lemma.
\begin{lemma}
If $g\in G(F)$ intertwines ${^\circ\!\rho}$, then $g \in {^\circ\!K}G^0(F) {^\circ\!K}$.
\end{lemma}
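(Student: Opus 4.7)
The plan is to prove the lemma by induction on the length $d$ of the twisted Levi sequence. The base case $d=0$ is immediate, since then $G^0=G$, so ${^\circ\!K}G^0(F){^\circ\!K}=G(F)$.

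For the inductive step, assume $d\geq 1$. The key first reduction is to show that any $g\in G(F)$ intertwining ${^\circ\!\rho}$ satisfies $g\in {^\circ\!K}G^{d-1}(F){^\circ\!K}$. To achieve this, I would exploit the factorization ${^\circ\!\rho}=\text{inf}({^\circ\!\rho'_{d-1}})\otimes \phi'_{d-1}\otimes \phi_d$ and restrict it to a subgroup of ${^\circ\!K}$ on which it acts by a single character. The natural candidate is $J^d_+\subseteq {^\circ\!K}$: by the Heisenberg--Weil construction, $\phi'_{d-1}$ restricted to $J^d_+$ equals the central character $\hat{\phi}_{d-1}$; the factor $\text{inf}({^\circ\!\rho'_{d-1}})$ is trivial on $J^d\supseteq J^d_+$ by construction of the inflation map; and $\phi_d$ is a character of $G(F)$. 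Hence ${^\circ\!\rho}\restriction_{J^d_+}$ is a scalar character, and any $g$ intertwining ${^\circ\!\rho}$ must intertwine this character on the appropriate intersection.

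Invoking the $G^d$-genericity of $\phi_{d-1}$ of depth $r_{d-1}$ at $y$ (axiom \textbf{D5}), I would then apply the structural result on intertwining sets of generic characters underlying \cite[Theorem~3.1]{Fin1} to conclude that the set of such $g$ is contained in ${^\circ\!K}G^{d-1}(F){^\circ\!K}$. Writing $g=k_1 h k_2$ with $k_i\in {^\circ\!K}$ and $h\in G^{d-1}(F)$, a Mackey-type argument applied to the restriction of ${^\circ\!\rho}$ to ${^\circ\!K^{d-1}}$ (which contains ${^\circ\!\rho_{d-1}}$ as a subrepresentation) shows that $h$ intertwines ${^\circ\!\rho_{d-1}}$ viewed as a representation of ${^\circ\!K^{d-1}}\subseteq G^{d-1}(F)$. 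The inductive hypothesis applied to the truncated Yu datum $(G^0\subsetneq\cdots\subsetneq G^{d-1},\,y,\,(r_0,\ldots,r_{d-1}),\,{^\circ\!\rho_{-1}},\,(\phi_0,\ldots,\phi_{d-1}))$ then yields $h\in {^\circ\!K^{d-1}}G^0(F){^\circ\!K^{d-1}}\subseteq {^\circ\!K}G^0(F){^\circ\!K}$, whence $g\in {^\circ\!K}G^0(F){^\circ\!K}$.

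The main obstacle is the first reduction. Since ${^\circ\!\rho}$ is not literally a character, one cannot directly invoke classical intertwining results for generic characters; one must first identify a subgroup on which it acts by a scalar and then carefully track how $g$-conjugation interacts with the resulting character. This is precisely the step where Yu's original argument relied on the faulty \cite[Proposition~14.1]{Yu} and \cite[Theorem~14.2]{Yu}. Following Fintzen's alternative approach in \cite[Theorem~3.1]{Fin1}, one analyzes the Heisenberg--Weil model directly, using explicit matrix-coefficient computations together with the genericity of $\phi_{d-1}$ and Moy--Prasad filtration estimates to rule out intertwiners outside the desired double coset. Adapting this reasoning to our setting is the technical heart of the proof.
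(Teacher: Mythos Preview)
Your inductive scheme can in principle be made to work, but it is far more elaborate than what the lemma requires, and your diagnosis of where the difficulty lies is off by one lemma. The paper disposes of this statement in two lines by citing \cite[Proposition~4.1]{Yu} and \cite[Proposition~4.4]{Yu}: by the latter, ${^\circ\!\rho}\restriction_{K_{+}}$ is $\theta_d$-isotypic with $\theta_d=\prod_{j=0}^{d}\hat\phi_j\restriction_{K_{+}}$, so any $g$ intertwining ${^\circ\!\rho}$ already intertwines the \emph{character} $\theta_d$ on $K_{+}$; by the former, the intertwining set of $\theta_d$ is $K_{+}G^0(F)K_{+}\subset {^\circ\!K}G^0(F){^\circ\!K}$. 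No induction is needed, because restricting to the full $K_{+}$ (rather than only $J^d_{+}$) captures the genericity of \emph{all} the $\phi_i$ simultaneously and lands you in $G^0(F)$ in one stroke, not merely in $G^{d-1}(F)$.

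Crucially, Propositions~4.1 and~4.4 of \cite{Yu} are \emph{not} among the results Fintzen flagged; the faulty \cite[Proposition~14.1]{Yu} and \cite[Theorem~14.2]{Yu} concern the Weil-representation analysis and only enter in the paper's \emph{next} lemma, the one asserting that an intertwiner $g\in G^0(F)$ must lie in $G^0(F)_{[y]}$. So your remark that ``this is precisely the step where Yu's original argument relied on the faulty results'' and your appeal to Fintzen's repair are misplaced here. A secondary issue: the claim that ${^\circ\!\rho}\restriction_{{^\circ\!K^{d-1}}}$ ``contains ${^\circ\!\rho_{d-1}}$ as a subrepresentation'' is not literally correct --- the restriction is ${^\circ\!\rho_{d-1}}\otimes(\tilde\phi_{d-1}\otimes\phi_d)\restriction_{{^\circ\!K^{d-1}}}$, a tensor product rather than a direct sum --- so your Mackey step, while salvageable via \cite[Proposition~12.3]{Yu}, would need more care than you indicate.
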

\begin{proof}
This follows from \cite[Proposition~4.1]{Yu} and \cite[Proposition~4.4]{Yu}. Note that \cite[Proposition~4.4]{Yu} is also true if we replace $\rho_i$ with $^\circ\!\rho$.
\end{proof}
Next, we prove the following Lemma.
\begin{lemma}
If $g\in G^0(F)$ intertwines $^\circ\!\rho$, then $g\in G^0(F)_{[y]}$.
\end{lemma}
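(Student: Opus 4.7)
The plan is to reduce the statement to the depth-zero case: show that if $g \in G^0(F)$ intertwines ${^\circ\!\rho}$, then $g$ already intertwines the depth-zero piece ${^\circ\!\rho_{-1}}$, and then invoke the classical fact that any element of $G^0(F)$ intertwining a representation inflated from a cuspidal representation of the finite reductive quotient $G^0(F)_{y,0}/G^0(F)_{y,0+}$ must normalize the parahoric $G^0(F)_{y,0}$, hence lie in $G^0(F)_{[y]}$.

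To carry this out, I would first analyze the restriction ${^\circ\!\rho}|_{G^0(F)_y}$. Since $G^0 \subseteq G^i$ for every $i$, each character $\phi_i$ restricts to a character of $G^0(F)_y$. The key structural observation is that $G^0(F)_{y,0+}$ acts trivially on $J^i/J^i_+$ by conjugation: for $u \in G^0(F)_{y,s}$ with $s>0$ and $x \in U_\beta(E)_{y, r_{i-1}/2}$ with $\beta \in \Phi(G^i,T,E)\setminus \Phi(G^{i-1},T,E)$, the commutator $[u,x]$ lies in $U_\beta(E)_{y,(r_{i-1}/2)+}$ by a standard commutator degree-shift argument. Hence the Heisenberg--Weil factor $\phi'_{i-1}$, when restricted to $G^0(F)_y$, factors through the (finite) reductive quotient $G^0(F)_{y,0}/G^0(F)_{y,0+}$ extended by $G^0(F)_y/G^0(F)_{y,0}$, and therefore ${^\circ\!\rho}|_{G^0(F)_{y,0+}}$ is a single character of the form $\hat{\phi}|_{G^0(F)_{y,0+}}$ for an appropriate extension $\hat{\phi}$ of $\prod_i \phi_i|_{G^0(F)_{y,0+}}$.

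Next, given that $g \in G^0(F)$ intertwines ${^\circ\!\rho}$, I would restrict the intertwining operator to the subgroup $H := G^0(F)_y \cap G^0(F)_{g^{-1}y}$. Using the previous step, ${^\circ\!\rho}|_{G^0(F)_y}$ has the structure ${^\circ\!\rho_{-1}} \otimes \tau$, where $\tau$ is an auxiliary representation of $G^0(F)_y$ coming from the characters $\phi_i$ and the Weil factors $\phi'_i$, and in particular $\tau|_{G^0(F)_{y,0+}}$ is the explicit character $\hat{\phi}|_{G^0(F)_{y,0+}}$. A Mackey-type comparison of $^\circ\!\rho|_H$ with its conjugate $({^g}{^\circ\!\rho})|_H$ should then extract a nonzero element of $\mathrm{Hom}_{H}({^\circ\!\rho_{-1}}|_H, {^g}({^\circ\!\rho_{-1}})|_H)$, i.e., $g$ intertwines ${^\circ\!\rho_{-1}}$. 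The classical depth-zero result (the support of the Hecke algebra attached to a cuspidal depth-zero type is contained in the normalizer of the parahoric, see \cite{Mo}) then gives $g \in G^0(F)_{[y]}$.

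The main obstacle is the descent in the middle step: one must carefully verify that conjugation by $g \in G^0(F)$ transforms the auxiliary factor $\tau$, which involves the Heisenberg--Weil representations over the roots of $G^i \setminus G^{i-1}$, in a controlled enough way that a nonzero intertwiner for the full $^\circ\!\rho$ really does force a nonzero intertwiner for $^\circ\!\rho_{-1}$ rather than being absorbed by the $\tau$-factor. Following the strategy of Fintzen in \cite[Theorem~3.1]{Fin1}, I would sidestep the invalid results \cite[Proposition~14.1]{Yu} and \cite[Theorem~14.2]{Yu} by working directly with the pro-$p$ subgroup $G^0(F)_{y,0+}$ and exploiting the genericity assumption {\bf D5} on each $\phi_i$, which controls how characters of the root subgroups transform and ensures that the only way the tensor factors can match up under conjugation by $g$ is the compatible one on $^\circ\!\rho_{-1}$.
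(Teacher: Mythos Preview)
Your reduction strategy aims at a conclusion that is stronger than what the paper actually proves at this stage, and that extra strength is precisely where the argument breaks down. You want to show that an intertwiner for ${^\circ\!\rho}$ over a general $g\in G^0(F)$ descends to an intertwiner for ${^\circ\!\rho_{-1}}$. The paper does establish such a descent, but only \emph{after} it already knows $g\in G^0(F)_{[y]}$ (this is the second claim of Proposition~\ref{support}); that argument uses in an essential way that $g\in K$, so that ${}^g\phi'_{i}=\phi'_{i}$ as representations of ${}^\circ\!K$ and the one-dimensionality of $\mathrm{Hom}_{J^{i+1}}({}^g\phi'_i,\phi'_i)$ can be matched with $\mathrm{End}_K(\phi'_i)$. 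For a general $g\in G^0(F)$ the point $y$ moves to $g^{-1}\!\cdot y$, the Heisenberg--Weil factors $\phi'_i$ are genuinely different (not just twisted by a character), and your ``Mackey-type comparison'' has no mechanism to cancel the $\tau$-factor against ${}^g\tau$: these are higher-dimensional representations, not characters, and nothing prevents the intertwiner from living entirely inside a matching of the Weil pieces.

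The paper's proof (following Fintzen) does \emph{not} attempt this descent. Instead it introduces, for each $0\le i\le d-1$, a subgroup $U_i$ built from root subgroups with $\alpha(y-g\!\cdot\!y)<0$, and shows two things: $U_i$ acts on the image $V_f$ of the intertwiner via an explicit character, while on ${}^\circ\!\rho$ it acts through the Heisenberg representation on the factor $V_i$. This forces $V_f\subset V_{-1}\otimes\bigl(\bigotimes_i V_i^{U_i}\bigr)\otimes V_d$. The space $V_i^{U_i}$ is then identified, via G\'erardin's description of the Weil representation restricted to a Siegel parabolic, with a smaller Weil representation on which a further depth-zero unipotent group $U'_{-1}=(G^0(F)_{y,0}\cap{}^gG^0(F)_{y,0+})G^0(F)_{y,0+}$ acts trivially. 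Hence $U'_{-1}$ acts on $V_f$ only through $V_{-1}$, producing nonzero $U'_{-1}$-invariants in $V_{-1}$; if $g\notin G^0(F)_{[y]}$ the image of $U'_{-1}$ in the reductive quotient is the unipotent radical of a proper parabolic, contradicting cuspidality. This is weaker than producing an intertwiner for ${}^\circ\!\rho_{-1}$, but it is exactly enough, and it is the step your outline is missing.
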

\begin{proof}
Let $f$ be a nonzero element of $\text{Hom}_{^\circ\!K\cap ^g\!\left({^\circ\!K}\right)}\left(^g\!\left(^\circ\!\rho\right), {^\circ\!\rho}\right)$. We write $V_{f}$ for the image of $f$. By \cite[Proposition~4.4]{Yu}, $^{\circ}\!\rho\restriction_{K_{+}}$ is $\theta_{d}$-isotypic, where
\[\theta_{d}=\prod_{j=0}^{d} \hat{\phi}_{j}\restriction_{K_{+}}.\]
This implies that $G^0(F)_{y, 0+}\left(\subset K_{+}\right)$ acts on $^\circ\!\rho$ by $\theta_{d}$, and ${^gG^0(F)_{y, 0+}}$ acts on $^g\!\left(^\circ\!\rho\right)$ by $^g\theta_{d}$.
For $h\in G^0(F)_{y, 0}\cap {^g}G^0(F)_{y, 0+}$ and $0\le j\le d$,
\[{^g}\hat{\phi}_{j}(h)=\hat{\phi}_{j}(g^{-1}hg)=\phi_{j}(g^{-1}hg)=\phi_{j}(g)^{-1}\phi_{j}(h)\phi_{j}(g)=\phi_{j}(h)=\hat{\phi}_{j}(h).\]
Hence $G^0(F)_{y, 0}\cap {^g}G^0(F)_{y, 0+}$ acts on $^g\!\left(^\circ\!\rho\right)$ by $\theta_{d}$.
Therefore, if we let 
\[U'_{-1}=\left(G^0(F)_{y, 0}\cap {^g}G^0(F)_{y, 0+}\right)G^0(F)_{y, 0+},\]
$U'_{-1}$ acts on $V_f$ by $\theta_{d}$.

From the construction, $^\circ\!\rho$ is decomposed as
\[^\circ\!\rho=\bigotimes_{i=-1}^{d} V_{i},\]
where $V_{-1}$ is the inflation of $^\circ\!\rho_{-1}$ via the map
\[^\circ\!K=G^0(F)_{y}J^1\cdots J^d\to G^0(F)_y J^1\cdots J^d/J^1\cdots J^d\simeq G^0(F)_y/G^0(F)_{y, r_0},\]
$V_{i}$ is the inflation of $\phi'_{i}$ via the map
\[^\circ\!K={^\circ\!K}^{i+1}J^{i+2}\cdots J^{d}\to {^\circ\!K}^{i+1}J^{i+2}\cdots J^{d}/ J^{i+2}\cdots J^{d}={^\circ\!K}^{i+1}/G^{i+1}(F)_{y, r_{i+1}},\]
for $0\le i\le d-2$, $V_{d-1}=\phi'_{d-1}$, and $V_d=\phi_{d}$.

Let $T$ be a maximal torus of $G^0$ so that $T$ splits over a tamely ramified extension $E$ of $F$ and $y, g\!\cdot\!y\in \mathcal{A}(T, E)$. Such a torus exists by the fact that any two points of $\mathcal{B}(G^0, F)$ is contained in an apartment of $\mathcal{B}(G^0, F)$ and by the discussion in the beginning of \cite[Section~2]{Yu}.
We define
\[U_i=G(F) \cap \langle U_{\alpha}(E)_{y, r_i/2}\mid \alpha\in \Phi(G^{i+1}, T, E) \backslash \Phi(G^i, T, E), \alpha(y-g\!\cdot\!y)<0\rangle,\]
for $0\le i \le d-1$.
Since $U_i$ is contained in $J^{i+1}$, the action of $U_i$ on $V_{j}$ is trivial for $-1\le j\le i-1$. For, $i+1\le j$, $U_i$ is contained in ${^\circ\!K}^{j}$, which acts on $V_j$ by $\phi_{j}'\restriction_{{^\circ\!K}^j}=\phi_{j}\otimes \tilde{\phi}_j$. Here, the action of ${^\circ\!K}^{j}$ by $\tilde{\phi}_j$ factors through the map ${^\circ\!K}^j\to \text{Sp}\left(J^{j+1}/J^{j+1}_{+}\right)$ induced by the conjugation. By \cite[Proposition~6.4.44]{BT}, 
\[[J^{j+1}, U_{i}]\subset [J^{j+1}, G^{j}(F)_{y, 0+}]\subset J^{j+1}_{+}.\]
Therefore, the action of $U_i$ by $\tilde{\phi}_j$ is trivial, hence $U_i$ acts on $V_j$ by $\phi_{j}$.
For $j=i$, $U_i$ acts on $V_{i}$ by the Heisenberg representation $\tilde{\phi}_{i}$ of $J^{i+1}/J^{i+1}_{+}$.
Putting these arguments together, we see that the action of $U_i$ by $^\circ\!\rho$ is
\[\left(\otimes_{j=-1}^{i-1}\text{Id}_{V_{j}}\right)\otimes \tilde{\phi}_{i} \otimes \left(\otimes_{j=i+1}^{d} \phi_{j}\right).\]

On the other hand, for $\alpha\in \Phi(G^{i+1}, T, E) \backslash \Phi(G^i, T, E)$ which satisfies $\alpha(y-g\!\cdot\!y)<0$, we have
\begin{align*}
^{g^{-1}}U_{\alpha}(E)_{y, r_i/2}&= \,^{g^{-1}}U_{\alpha}(E)_{g\cdot y, (r_i/2) - \alpha(y-g\cdot y)}\\
&= U_{g^{-1}\alpha}(E)_{y, (r_i/2)-\alpha(y-g\cdot y)}\\
&\subset U_{g^{-1}\alpha}(E)_{y, (r_i/2)+}.
\end{align*}
Here, $g^{-1}\alpha$ denotes the character $t\mapsto \alpha(gtg^{-1})$ of $^{g^{-1}}T$, and $U_{g^{-1}\alpha}=U_{{^{g^{-1}}\!T}, g^{-1}\alpha}$ denotes the corresponding root subgroup.
Since $J^{i+1}_{+}$ is independent of the choice of a maximal torus, and $^{g^{-1}}T$ is a maximal torus of $G^0$ so that $^{g^{-1}}T$ splits over $E$ and $y\in \mathcal{A}(^{g^{-1}}T, E)$, $^{g^{-1}}U_i \subset J^{i+1}_{+}\subset K^{i+1}_{+}\subset K^d_{+}$. As $^\circ\!\rho\restriction_{K^d_{+}}$ is $\theta_{d}$-isotypic, $U_i$ acts on $V_f$ by $^g \theta_d\restriction_{U_i}$.

By the construction of $\hat{\phi}_{j}$ in \cite[Section~4]{Yu}, $\hat{\phi}_{j}$ is trivial on
\[G(F)\cap \langle U_{g^{-1}\alpha}(E)_{y, (r_j/2)+}\mid \alpha\in \Phi(G, T, E)\backslash \Phi(G^j, T, E) \rangle\]
for $0\le j\le d-1$.
Therefore, for $j\le i$, $^{g^{-1}}U_i$ is contained in $\text{Ker}(\hat{\phi}_{j})$. This implies that
\[^g \theta_d\restriction_{U_i}=\prod_{j={i+1}}^{d} {^g \hat{\phi}_j\restriction_{U_i}}=\prod_{j=i+1}^{d} {^g \phi_j\restriction_{U_i}}=\prod_{j=i+1}^{d} \phi_j\restriction_{U_i}.\]
Hence $U_i$ acts on $V_{f}$ by
\[\left(\otimes_{j=-1}^{i}\text{Id}_{V_{j}}\right)\otimes \left(\otimes_{j=i+1}^{d} \phi_{j}\right).\]
Comparing the action of $U_i$ by $^\circ\!\rho$ and the action of $U_i$ on $V_f$, we conclude that $V_f$ is contained in $V_{-1}\otimes (\otimes_{i=0}^{d-1} V_i^{U_i}) \otimes V_{d}$.

We study the subspace $V_i^{U_i}$ for $0\le i \le d-1$. Recall that $V_i$ is the space of the Weil representation of $\text{Sp}\left(J^{i+1}/J^{i+1}_{+}\right) \ltimes (J^{i+1}/J^{i+1}_{+})^\#$. We write $W^{i+1}=J^{i+1}/J^{i+1}_{+}$. We define the subspace $(W^{i+1})_{1}$ to be the image of $U_i$ in $W^{i+1}$, $(W^{i+1})_{2}$ to be the image of
\[G(F) \cap \langle U_{\alpha}(E)_{y, r_i/2}\mid \alpha\in \Phi(G^{i+1}, T, E) \backslash \Phi(G^i, T, E), \alpha(y-g\!\cdot\!y)=0\rangle\]
in $W^{i+1}$, and $(W^{i+1})_3$ to be the image of
\[G(F) \cap \langle U_{\alpha}(E)_{y, r_i/2}\mid \alpha\in \Phi(G^{i+1}, T, E) \backslash \Phi(G^i, T, E), \alpha(y-g\!\cdot\!y)>0\rangle\]
in $W^{i+1}$. Note that $(W^{i+1})_{k}$ is written by $V_k$ in \cite[Section~13]{Yu} for $1\le k\le 3$. By \cite[Lemma~13.6]{Yu}, $(W^{i+1})_1, (W^{i+1})_3$ are totally isotropic subspaces of the symplectic space $\left(W^{i+1}, \langle , \rangle_{i+1}\right)$ and
\[{(W^{i+1})_{1}}^{\perp}=(W^{i+1})_1\oplus (W^{i+1})_2, {(W^{i+1})_3}^{\perp} =(W^{i+1})_2\oplus (W^{i+1})_3.\]
Let $P_{i+1}$ be the maximal parabolic subgroup of $\text{Sp}(W^{i+1})$ that preserves $(W^{i+1})_{1}$. Then we obtain the natural map
\[\iota\colon P_{i+1}\to \text{Sp}\left({(W^{i+1})_{1}}^{\perp}/(W^{i+1})_{1}\right)\simeq\text{Sp}\left((W^{i+1})_2\right).\]
We write $(\tilde{\phi}_{i})_2$ for the Weil representation of $\text{Sp}\left((W^{i+1})_2\right)\ltimes((W^{i+1})_2)^{\#}$ associated with the central character $\hat{\phi}_{i}$. By \cite[Theorem~2.4 (b)]{Ge}, the restriction of $\tilde{\phi}_i$ from $\text{Sp}\left(W^{i+1}\right)\ltimes (W^{i+1})^{\#}$ to $P_{i+1}\ltimes (W^{i+1})^{\#}$ is given by
\[\text{ind}_{P_{i+1}\ltimes \left((W^{i+1})_1\oplus ((W^{i+1})_2)^{\#}\right)}^{P_{i+1} \ltimes (W^{i+1})^{\#}} (\tilde{\phi}_{i})_2 \otimes (\chi_{i+1}\ltimes 1).\]
Here, we regard $(\tilde{\phi}_{i})_2$ be a representation of $P_{i+1}\ltimes \left((W^{i+1})_1\oplus ((W^{i+1})_2)^{\#}\right)$ by defining the action of $(W^{i+1})_1$ to be trivial and defining the action of $P_{i+1}$ to be the composition of $\iota$ and $(\tilde{\phi}_i)_2$. The character $\chi_{i+1}$ of $P_{i+1}$ is $\chi^{E_{+}}$ of \cite[Lemma~2.3 (d)]{Ge}, which factors through the natural map $P_{i+1}\to \text{GL}((W^{i+1})_1)$.
Since $(W^{i+1})_3$ is a complete system of representatives for 
\[\left(P_{i+1} \ltimes (W^{i+1})^{\#}\right)/\left(P_{i+1}\ltimes \left((W^{i+1})_1\oplus ((W^{i+1})_2)^{\#}\right)\right),\]
as a representation of $P_{i+1}\ltimes \left((W^{i+1})_1\oplus ((W^{i+1})_2)^{\#}\right)$,
\[\text{ind}_{P_{i+1}\ltimes \left((W^{i+1})_1\oplus ((W^{i+1})_2)^{\#}\right)}^{P_{i+1} \ltimes (W^{i+1})^{\#}} (\tilde{\phi}_{i})_2 \otimes (\chi_{i+1}\ltimes 1) \simeq \bigoplus_{v_3\in (W^{i+1})_3} {^{v_3}\!(\tilde{\phi}_{i})_2 \otimes (\chi_{i+1}\ltimes 1)}.\]
Since $(W^{i+1})_1$ acts on $(\tilde{\phi}_i)_2$ trivially, $(W^{i+1})_1$ acts $^{v_3}\!(\tilde{\phi}_i)_2$ by
\[v_1 \mapsto \hat{\phi}_{i}({v_3}^{-1}v_1 v_3 {v_1}^{-1})=\langle{v_3}^{-1}, v_1 \rangle_{i+1}.\]
We note that ${(W^{i+1})_{3}}^{\perp}=(W^{i+1})_2\oplus (W^{i+1})_3$. Hence for every element $v_3\in (W^{i+1})_3$ there exists $v_1\in (W^{i+1})_1$ such that $\langle {v_3}^{-1}, v_1 \rangle_{i+1}\neq 0$.
Therefore,  
\[\left(\text{ind}_{P_{i+1}\ltimes (W^{i+1})_1\oplus ((W^{i+1})_2)^{\#}}^{P_{i+1} \ltimes (W^{i+1})^{\#}} (\tilde{\phi}_{i})_2 \otimes (\chi_{i+1}\ltimes 1)\right)^{\{1\}\ltimes ((W^{i+1})_1\times \{0\})}\simeq(\tilde{\phi}_{i})_2\otimes \chi_{i+1}\]
as a representation of $P_{i+1}$.

The image of $U_i$ via the special isomorphism constructed in \cite[Proposition~11.4]{Yu} is $(W^{i+1})_1\times \{0\}\subset (W^{i+1})^{\#}$. Therefore $P_{i+1}$ acts on $V_i^{U_i}$ by $(\tilde{\phi}_{i})_2\otimes \chi_{i+1}$.

We define
\[U_{-1}=G(F) \cap \langle U_{\alpha}(E)_{y, 0}\mid \alpha\in \Phi(G^{0}, T, E), \alpha(y-g\!\cdot\!y)<0\rangle.\]
Then, $U'_{-1}$ is contained in $U_{-1}G^0(F)_{y, 0+}$. By \cite[Proposition~6.4.44]{BT}, 
\[[J^{i+1}, G(F)_{y,0+}]\subset J^{i+1}_{+},\]
so the image of $G^0(F)_{y, 0+}$ in $\text{Sp}\left(W^{i+1}\right)$ is trivial.
Also, by \cite[Proposition~6.4.44]{BT},
\[[G^{i+1}(F)_{y, f_{(1,2)}}, U_{-1}]\subset G^{i+1}(F)_{y, f_1},\]
where $f{(1,2)}$ and $f_1$ are functions on $\Phi(G^{i+1}, T, E)\cup \{0\}$ defined by
\begin{align*}
f_{(1,2)}(\alpha)&=
\begin{cases}
{r_i} & \left(\alpha\in \Phi(G^i, T, E)\cup \{0\}\right)\\
\frac{r_i}{2} & \left(\alpha\in \Phi(G^{i+1}, T, E)\backslash \Phi(G^i, T, E), \alpha(y-g\!\cdot\!y)\le 0\right)\\
(\frac{r_i}{2})+ & \left(\alpha\in \Phi(G^{i+1}, T, E)\backslash \Phi(G^i, T, E), \alpha(y-g\!\cdot\!y)> 0\right),
\end{cases}\\
f_1(\alpha)&=
\begin{cases}
{r_i} & \left(\alpha\in \Phi(G^i, T, E)\cup \{0\}\right)\\
\frac{r_i}{2} & \left(\alpha\in \Phi(G^{i+1}, T, E)\backslash \Phi(G^i, T, E), \alpha(y-g\!\cdot\!y)< 0\right)\\
(\frac{r_i}{2})+ & \left(\alpha\in \Phi(G^{i+1}, T, E)\backslash \Phi(G^i, T, E), \alpha(y-g\!\cdot\!y)\ge 0\right)
\end{cases}
\end{align*}
and for $h=f_{(1,2)}, f_1$, 
\[G^{i+1}(F)_{y, h}=G(F)\cap \langle U_{\alpha}(E)_{y, h(\alpha)}\mid \alpha\in \Phi(G^{i+1}, T, E)\cup \{0\} \rangle.\]
Note that the image of $G^{i+1}(F)_{y,f_{(1,2)}}$ (resp.\,$G^{i+1}(F)_{y, f_1}$) in $W^{i+1}$ is $(W^{i+1})_1\oplus (W^{i+1})_2$ (resp.\, $(W^{i+1})_1$).
Therefore, the image of $U_{-1}$ in $\text{Sp}\left(W^{i+1}\right)$ is contained in $P_{i+1}$, and the image of $U_{-1}$ via the map
\[\iota\colon P_{i+1}\to \text{Sp}\left({(W^{i+1})_{1}}^{\perp}/(W^{i+1})_{1}\right)\simeq\text{Sp}\left((W^{i+1})_2\right)\]
is trivial. Moreover, since $U_{-1}$ is a prp-$p$ subgroup of $G^0(F)$, the image in $\text{GL}((W^{i+1})_1)$ under the natural map $P_{i+1}\to \text{GL}((W^{i+1})_1)$ is a $p$-group, hence contained in the commutator subgroup of $\text{GL}((W^{i+1})_1)$. Therefore, $\chi_{i+1}$ is trivial on the image of $U_{-1}$. These arguments imply that $\tilde{\phi}_i (U'_{-1})$ is trivial for $0\le i\le d-1$, so the action of $U'_{-1}$ on $(\otimes_{i=0}^{d-1} V_i^{U_i}) \otimes V_{d}$ is $\theta_d$-isotypic. Since $U'_{-1}$ acts on $V_f$ by $\theta_{d}$ and $V_f$ is contained in $V_{-1}\otimes (\otimes_{i=0}^{d-1} V_i^{U_i}) \otimes V_{d}$, this implies that $V_{-1}^{U'_{-1}}$ is nonzero. As $[y]$ is a vertex, if $g \not \in G^0(F)_{[y]}$, the image of $U'_{-1}$ in $G^0(F)_{y, 0}/G^0(F)_{y, 0+}$ is a unipotent radical of a proper parabolic subgroup of $G^0(F)_{y, 0}/G^0(F)_{y, 0+}$. This contradicts that ${^\circ\!\rho}_{-1}\restriction_{G^0(F)_{y, 0}}$ is the inflation of a cuspidal representation of $G^0(F)_{y,0}/ G^0(F)_{y, 0+}$.
Therefore we obtain that $g\in G^0(F)_{[y]}$.
\end{proof}
We prove the second claim of Proposition \ref{support} using an argument in \cite[Proposition~4.6]{Yu}. Since $G^0(F)_{[y]}$ is contained in $K$ and $V_{i}$ is a restriction of a representation of $K$ to $^\circ\!K$ for $0\le i\le d$, if $g\in G^0(F)_{[y]}$ intertwines ${^\circ\!\rho_{-1}}$, then $g$ intertwines $^\circ\!\rho$. We prove that the converse is true. Assume that $g\in G^0(F)_{[y]}$ intertwines ${^\circ\!\rho}$. Since $V_d$ is a restriction of a character of $G(F)$, $g$ also intertwines $\otimes_{j=-1}^{d-1} V_i$. If $d=0$ it implies that $g$ intertwines ${^\circ\!\rho_{-1}}$. Suppose $d\ge 1$. We prove that $g$ intertwines $\otimes_{j=-1}^{d-2} V_i$.
Let $f$ be a nonzero element of $\text{Hom}_{^\circ\!K}\left(^g\!\left(^\circ\!\rho\right), {^\circ\!\rho}\right)$. We write $f=\sum_{j} f'_j\otimes f''_j$, where $f'_j\in \text{End}_{\mathbb{C}}(\otimes_{i=-1}^{d-2} V_i)$ and $f''_j\in \text{End}_{\mathbb{C}}(V_{d-1})$. We may assume that $\{f'_j\}$ is a linearly independent set.

Since the action of $J^{d}$ on $\otimes_{i=-1}^{d-2} V_i$ is trivial, for $x\in J^{d}$ we obtain
\[\sum_{j} f'_j\otimes \left(f''_j\circ (^g \phi'_{d-1})(x)\right)=\sum_{j} f'_j \otimes \left(\phi'_{d-1}(x)\circ f''_j\right).\] 
The linearly independence of $\{f_j\}$ implies that $f''_j\in \text{Hom}_{J^d}(^g\phi'_{d-1}, \phi'_{d-1})$. By \cite[Proposition~12.3]{Yu}, $\text{Hom}_{J^d}(^g\phi'_{d-1}, \phi'_{d-1})$ is $1$-dimensional, so we may assume that there is only one $j$. We write $f=f'\otimes f''$, where $f'\in \text{End}_{\mathbb{C}}(\otimes_{i=-1}^{d-2} V_i)$ and $f''\in \text{Hom}_{J^d}(^g\phi'_{d-1}, \phi'_{d-1})$. Since $g\in G^0(F)_{[y]}\subset K$, $\text{Hom}_{K}(^g\phi'_{d-1}, \phi'_{d-1})=\text{End}_K(\phi'_{d-1})$ is $1$-dimensional, and it is a subspace of $\text{Hom}_{J^d}(^g\phi'_{d-1}, \phi'_{d-1})$, which is also $1$-dimensional. Therefore $\text{Hom}_{J^d}(^g\phi'_{d-1}, \phi'_{d-1})=\text{Hom}_{K}(^g\phi'_{d-1}, \phi'_{d-1})$, and $f''$ is $K$-equivariant. This implies that $f'$ is a nonzero element in $\text{Hom}_{^\circ\!K}(^g(\otimes_{j=-1}^{d-2} V_i), \otimes_{j=-1}^{d-2} V_i)$, and $g$ intertwines $\otimes_{j=-1}^{d-2} V_i$. Then, an inducting argument implies that $g$ intertwines $^\circ\!\rho_{-1}$.

We now prove \cite[Conjecture~0.2]{Yu}.
\begin{theorem}
There is a support-preserving algebra isomorphism
\[\check{\mathcal{H}}\left(G(F), {^\circ\!\rho}\right)\simeq \check{\mathcal{H}}\left(G^0(F), {^\circ\!\rho_{-1}}\right).\]
\end{theorem}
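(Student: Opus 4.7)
The plan is to exploit both Proposition~\ref{support} (to compare supports) and the analysis in its proof (to compare intertwining spaces), producing an explicit support-preserving linear bijection at the level of functions supported on a single double coset, and then to verify that this bijection is multiplicative. By Proposition~\ref{support} and Remark~\ref{remsup}, every $\Phi \in \check{\mathcal{H}}(G(F), {^\circ\!\rho})$ is supported on $G^0(F)_{[y]}\cdot {^\circ\!K}$, and ${^\circ\!K}\, g\, {^\circ\!K} = g\cdot {^\circ\!K}$ for $g \in G^0(F)_{[y]}$; the standard depth-zero analysis forces $\text{supp}\,\check{\mathcal{H}}(G^0(F), {^\circ\!\rho_{-1}}) \subseteq G^0(F)_{[y]}$, and the second claim of Proposition~\ref{support} identifies the intertwining $g$'s on the two sides.

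Next, for each such $g$, I canonically match the intertwining spaces. The argument in the proof of the second claim of Proposition~\ref{support} shows that every $\psi \in \text{Hom}_{{^\circ\!K} \cap {^g}({^\circ\!K})}({^g}({^\circ\!\rho}), {^\circ\!\rho})$ decomposes uniquely along ${^\circ\!\rho} = V_{-1}\otimes V_0 \otimes \cdots \otimes V_d$ as $\psi_{-1}\otimes \psi_0 \otimes \cdots \otimes \psi_d$, with $\psi_i$ for $i \ge 0$ determined up to a scalar. Since $G^0(F)_{[y]} \subset K$ and $\rho$ is a representation of $K$, the canonical element $\rho(g)$ factors as $\rho_{-1}(g)\otimes \phi'_0(g)\otimes \cdots \otimes \phi_d(g)$, so normalizing $\psi_i$ ($i \ge 0$) by $\phi'_i(g)$ (or $\phi_d(g)$ for $i=d$) produces a canonical isomorphism
\[\text{Hom}_{{^\circ\!K} \cap {^g}({^\circ\!K})}\bigl({^g}({^\circ\!\rho}), {^\circ\!\rho}\bigr) \xrightarrow{\ \sim\ } \text{Hom}_{G^0(F)_y \cap {^g}G^0(F)_y}\bigl({^g}({^\circ\!\rho_{-1}}), {^\circ\!\rho_{-1}}\bigr).\]
Using this, I define $\Psi \colon \check{\mathcal{H}}(G^0(F), {^\circ\!\rho_{-1}}) \to \check{\mathcal{H}}(G(F), {^\circ\!\rho})$ by setting, for $f$ supported on $G^0(F)_y\, g\, G^0(F)_y$,
\[\Psi(f)(g) = f(g) \otimes \check{\phi'_0}(g)\otimes \cdots \otimes \check{\phi'_{d-1}}(g) \otimes \check{\phi_d}(g),\]
and extending to all of $G(F)$ via the ${^\circ\!K}$-bi-transformation rule; the preceding steps show $\Psi$ is a well-defined, support-preserving linear bijection.

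The hard part will be verifying multiplicativity, $\Psi(f_1 * f_2) = \Psi(f_1) * \Psi(f_2)$. The target convolution averages over ${^\circ\!K}$ while the source averages over $G^0(F)_y$, and these groups differ by the positive-depth factor $J^1\cdots J^d$. Expanding both products via coset representatives for $({^\circ\!K} \cap {^{g_1}}({^\circ\!K}))\backslash {^\circ\!K}$ and $(G^0(F)_y \cap {^{g_1}}G^0(F)_y)\backslash G^0(F)_y$ respectively, I will argue that (i)~contributions from representatives lying in the positive-depth part vanish, by refining the intertwining analysis underlying Proposition~\ref{support} and applying it to intermediate convolved terms; and (ii)~the surviving contributions recombine correctly because the normalization via $\rho(g)$ respects the group law in $K$---the identity $\rho(g_1 g_2) = \rho(g_1)\rho(g_2)$ trivializes the cocycle obstruction that would otherwise appear when gluing chosen intertwiners across distinct double cosets. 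Point (i) is the most delicate step; it requires a careful bookkeeping of how conjugation by $g_1, g_2 \in G^0(F)_{[y]}$ interacts with the positive-depth part of ${^\circ\!K}$, and is where most of the technical weight of the proof will fall.
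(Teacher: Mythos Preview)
Your approach is essentially the paper's: match the one-dimensional intertwining spaces by normalizing the $V_i$-components ($i \ge 0$) via $\phi'_i(g)$ (using that $G^0(F)_{[y]} \subset K$ and that each $\phi'_i$ is a genuine representation of $K$), and then verify that the resulting bijection of basis elements respects convolution because $\phi'_i(g_1)\phi'_i(g_2) = \phi'_i(g_1 g_2)$.

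However, you have manufactured a difficulty where none exists. You correctly note via Remark~\ref{remsup} that $G^0(F)_{[y]}$ normalizes ${^\circ\!K}$, so that ${^\circ\!K}\, g\, {^\circ\!K} = g\, {^\circ\!K}$; but the very same fact gives ${^{g_1}}({^\circ\!K}) = {^\circ\!K}$ for every $g_1 \in G^0(F)_{[y]}$, hence $({^\circ\!K} \cap {^{g_1}}({^\circ\!K}))\backslash {^\circ\!K}$ is a single coset, and likewise on the $G^0$ side. There is therefore no sum over representatives and no ``positive-depth contribution'' to kill: your step~(i) is vacuous. The convolution collapses immediately to the single identity
\[T_{g_1} \circ T_{g_2} \circ {^\circ\!\rho}\bigl((g_1g_2)^{-1}g_3\bigr) = c \cdot T_{g_3},\]
and comparing tensor factors using $\phi'_j(g_1)\circ\phi'_j(g_2)\circ\phi'_j\bigl((g_1g_2)^{-1}g_3\bigr) = \phi'_j(g_3)$---which is exactly your step~(ii)---shows that the structure constant $c$ agrees with the corresponding constant $c_{-1}$ on the depth-zero side. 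This is precisely the paper's argument, and it is short.
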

Here, we say that an isomorphism $\eta:\check{\mathcal{H}}\left(G(F), {^\circ\!\rho}\right)\to \check{\mathcal{H}}\left(G^0(F), {^\circ\!\rho_{-1}}\right)$ is support-preserving if for every $f\in \check{\mathcal{H}}\left(G(F), {^\circ\!\rho}\right)$, $\text{supp}(f)={^\circ\!K} \text{supp}(\eta(f)) {^\circ\!K}$.
\begin{proof}
We set
\[G^0_{^\circ\!\rho}=\{g\in G^0(F)_{[y]}\mid \text{$g$ intertwines ${^\circ\!\rho}$}\}=\{g\in G^0(F)_{[y]}\mid \text{$g$ intertwines ${^\circ\!\rho_{-1}}$}\}.\]
The second equation follows from Proposition \ref{support}.
By Proposition \ref{support} and Remark \ref{remsup}, the support of $\check{\mathcal{H}}\left(G(F), {^\circ\!\rho}\right)$ is $G^0_{^\circ\!\rho} {^\circ\!K}$, and the support of $\check{\mathcal{H}}\left(G^0(F), {^\circ\!\rho_{-1}}\right)$ is $G^0_{^\circ\!\rho} {^\circ\!K^0}$. Fix a complete system of representatives $\{g_i\}_{i}\subset G^0_{^\circ\!\rho}$ for
\[G^0_{^\circ\!\rho} {^\circ\!K}/{^\circ\!K}=G^0_{^\circ\!\rho} {^\circ\!K^0}/{^\circ\!K^0}=G^0_{^\circ\!\rho}/\left(G^0_{^\circ\!\rho}\cap{^\circ\!K^0}\right).\]
For each $g_i$, $\text{Hom}_{^\circ\!K^0}(^{g_i}({^\circ\!\rho_{-1}}), {^\circ\!\rho_{-1}})$ is $1$-dimensional. We fix a basis $(T_{g_i})_{-1}$ of this space. We define
\[T_{g_i}=(T_{g_{i}})_{-1}\otimes \phi'_0(g_i)\otimes \ldots \otimes \phi'_{d}(g_i).\]
The element $T_{g_i}$ is a basis of the $1$-dimensional vector space $\text{Hom}_{^\circ\!K}(^{g_i}\!({^\circ\!\rho}), {^\circ\!\rho})$.
We define $f_{g_i}\in \check{\mathcal{H}}\left(G(F), {^\circ\!\rho}\right)$ and $(f_{g_i})_{-1}\in \check{\mathcal{H}}\left(G^0(F), {^\circ\!\rho_{-1}}\right)$ by
\begin{align*}
f_{g_i}(x)&=
\begin{cases}
T_{g_i}\circ {^\circ\!\rho(k)} & (x=g_ik, k\in {^\circ\!K})\\
0 & (\text{otherwise}),
\end{cases}\\
(f_{g_i})_{-1}(x)&=
\begin{cases}
(T_{g_i})_{-1}\circ {{^\circ\!\rho_{-1}}(k)} & (x=g_ik, k\in {^\circ\!K^0})\\
0 & (\text{otherwise}).
\end{cases}
\end{align*}
Then as vector spaces, 
\[\check{\mathcal{H}}\left(G(F), {^\circ\!\rho}\right)=\bigoplus_{i} \mathbb{C}f_{g_i}, \check{\mathcal{H}}\left(G^0(F), {^\circ\!\rho_{-1}}\right)=\bigoplus_{i} \mathbb{C}(f_{g_i})_{-1}\]
and $f_{g_i}\mapsto (f_{g_i})_{-1}$ gives a support preserving vector space isomorphism $\check{\mathcal{H}}\left(G(F), {^\circ\!\rho}\right) \to \check{\mathcal{H}}\left(G^0(F), {^\circ\!\rho_{-1}}\right)$. We will show that it is an algebra isomorphism. Let $g_{i_1}, g_{i_2}$ be representatives, and take $g_{i_3}$ so that $g_{i_1} g_{i_2}\in g_{i_3} {^\circ\!K^{0}}$. We simply write $g_1, g_2,g_3$ for $g_{i_1}, g_{i_2}, g_{i_3}$, respectively. For $x\in G$, we have
\begin{align*}
(f_{g_1}*f_{g_2})(x)&=\int_{G} f_{g_1}(y) \circ f_{g_2}(y^{-1}x) dy\\
&= \int_{{^\circ\!K}} T_{g_1}\circ {^\circ\!\rho}(k) \circ f_{g_2}(k^{-1}{g_1}^{-1}x) dk\\
&=
\begin{cases}
T_{g_1} \circ T_{g_2} \circ {^\circ\!\rho} ((g_1 g_2)^{-1} g_3 k')& (x=g_{3}k', k'\in {^\circ\!K})\\
0 & (\text{otherwise})
\end{cases}\\
&=c\cdot f_{g_3}(x),
\end{align*}
where $c$ satisfies
\[c\cdot T_{g_3}=T_{g_1}\circ T_{g_2} \circ {^\circ\!\rho}((g_1g_2)^{-1}g_3).\]
By a similar calculation, we obtain
\[(f_{g_1})_{-1} *(f_{g_2})_{-1}=c_{-1}\cdot (f_{g_3})_{-1},\]
where $c_{-1}$ satisfies
\[c_{-1}\cdot (T_{g_3})_{-1}=(T_{g_1})_{-1}\circ (T_{g_2})_{-1} \circ {^\circ\!\rho_{-1}}((g_1g_2)^{-1}g_3).\]
By the definition of $T_{g_i}$,
\begin{align*}
&T_{g_1}\circ T_{g_2} \circ {^\circ\!\rho}((g_1g_2)^{-1}g_3)\\
&=\left((T_{g_1})_{-1}\circ (T_{g_2})_{-1} \circ {^\circ\!\rho_{-1}}((g_1g_2)^{-1}g_3)\right)\otimes \left(\bigotimes_{j=0}^{d} \phi'_j(g_1) \circ \phi'_j(g_2) \circ \phi'_j((g_1 g_2)^{-1}g_3)\right)\\
&=\left((T_{g_1})_{-1}\circ (T_{g_2})_{-1} \circ {^\circ\!\rho_{-1}}((g_1g_2)^{-1}g_3)\right)\otimes \left(\bigotimes_{j=0}^{d} \phi'_j(g_3)\right)\\
&= c_{-1}\cdot (T_{g_3})_{-1}\otimes \left(\bigotimes_{j=0}^{d} \phi'_j(g_3)\right)\\
&= c_{-1}\cdot T_{g_3}.
\end{align*}
This implies that $c=c_{-1}$ and the isomorphism constructed above is an algebra isomorphism.
\end{proof}
\section{Hecke algebras for regular supercuspidal types}
\label{secreg}
Firstly, we review the definition and the construction of regular supercuspidal representations. In this section, we assume that $p$ is odd, is not a bad prime for $G$, and dose not divide the order of the fundamental group of $G_{\text{der}}$. These assumptions are needed for the existence of a Howe factorization.

Let $\left(S, \theta \right)$ be a regular tame elliptic pair, i.\,e., $S$ is a tame elliptic maximal torus of $G$, and $\theta$ is a character of $S(F)$ which satisfy the conditions in \cite[Definition~3.7.5]{Kal}. Let $[y]$ be the point of reduced building of $G$ over $F$ which is associated to $S$ in the sense of the paragraph above \cite[Lemma~3.4.3]{Kal} and chose $y\in \mathcal{B}(G, F)$ such that the projection of $y$ on the reduced building is $[y]$.

From $\left(S, \theta \right)$, we define a sequence of twisted Levi subgroups
\[\overrightarrow{G}=\left(S=G^{-1}\subset G^0\subsetneq \ldots \subsetneq G^d=G\right)\]
in $G$ and a sequence of real numbers $\overrightarrow{r}=\left(0=r_{-1}, r_0, \ldots , r_d\right)$ as in \cite[3.6]{Kal}.

A Howe factorization of $\left(S, \theta\right)$ is a sequences of characters $\overrightarrow{\phi}=\left(\phi_{-1}, \ldots, \phi_d\right)$, where $\phi_i$ is a character of $G^i(F)$ satisfying
\[\theta=\prod_{i=-1}^{d} \phi_i\restriction_{S(F)}\]
and some additional technical conditions (see \cite[Definition~3.6.2]{Kal}).
By \cite[Proposition~3.6.7]{Kal}, $\left(S, \theta\right)$ has a Howe factorization.
We take a Howe factorization $\overrightarrow{\phi}$.
Using the pair $(S, \phi_{-1})$, we define an irreducible representation $\rho_{-1}$ of $G^0(F)_{[y]}$ as follows.

Let $\mathsf{G}_{y}^{\circ}$ be the reductive quotient of the special fiber of the connected parahoric group scheme of $G^0$ associated to $y$ and $\mathsf{S}^{\circ}$ be the reductive quotient of the special fiber of the connected N\'eron model of $S$. Then $\mathsf{S}^\circ\subset \mathsf{G}_{y}^\circ$ is an elliptic maximal torus. The restriction of $\phi_{-1}$ to $S(F)_{0}$ factors through a character $\bar{\phi}_{-1}: \mathsf{S}^\circ(k_F)\to \mathbb{C}^{\times}$.

Let $\kappa_{(S, \bar{\phi}_{-1})}=\pm R_{\mathsf{S}^\circ, \bar{\phi}_{-1}}$ be the irreducible cuspidal representation of $\mathsf{G}_{y}^\circ(k_F)$ arising from the Deligne--Lusztig construction applied to $\mathsf{S}^\circ$ and $\bar{\phi}_{-1}$ \cite[Section~1]{DL}. We identify it with its inflation to $G^0(F)_{y, 0}$. We can extend $\kappa_{(S, \bar{\phi}_{-1})}$ to a representation $\kappa_{(S, \phi_{-1})}$ of $S(F) G^0(F)_{y, 0}$ \cite[3.4.4]{Kal}.
Now we define $\rho_{-1}=\text{ind}_{S(F) G^0(F)_{y, 0}}^{G^0(F)_{[y]}} \kappa_{(S, \phi_{-1})}$.
\begin{proposition}
Let $\left(S, \theta \right)$ be a regular tame elliptic pair and $G^i, r_i, \phi_i, \rho_{-1}$ be as above. Then, $\left((G^i)_{i=0}^{d}, y, (r_{i})_{i=0}^{d}, \rho_{-1}, (\phi_i)_{i=0}^{d}\right)$ satisfies {\bf D1}, {\bf D2}, {\bf D3}, {\bf D4}, {\bf D5}.
\end{proposition}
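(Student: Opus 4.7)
The plan is to verify each of the five conditions \textbf{D1}--\textbf{D5} in turn, using the specific construction of $\overrightarrow{G}$, $y$, $\overrightarrow{r}$, $\rho_{-1}$, and $\overrightarrow{\phi}$ from Kaletha's work. Conditions \textbf{D1}, \textbf{D2}, \textbf{D3} are essentially structural and follow from Kaletha's setup, while \textbf{D5} matches the defining axioms of a Howe factorization, and \textbf{D4} is where the substantive work lies.

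For \textbf{D1}, the sequence $\overrightarrow{G}$ is defined in \cite[3.6]{Kal} as a chain of centralizers in $G$ of subtori of $S$, hence each $G^i$ contains $S$ as a maximal torus and therefore splits over the tamely ramified extension $E/F$ that splits $S$; the Levi sequence property over $E$ follows similarly. For the anisotropy condition, since $S$ is a maximal torus of $G^0$ we have $Z(G^0) \subseteq S$, so $Z(G^0)/Z(G)$ embeds into $S/Z(G)$, which is anisotropic because $S$ is elliptic in $G$. For \textbf{D2}, take $T = S$; by construction $[y]$ is a vertex of the reduced building of $G^0(F)$, and choosing $y$ as in the setup lets us arrange $y \in \mathcal{A}(S,E)$. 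For \textbf{D3}, the sequence $(r_0, \ldots, r_d)$ from \cite[3.6]{Kal} is by construction strictly increasing and positive, with $r_{d-1} = r_d$ occurring only in the case $\phi_d = 1$, matching Yu's convention exactly.

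For \textbf{D5}, the depth conditions in Yu's \textbf{D5} (trivial on $G^i(F)_{y,r_i+}$, nontrivial on $G^i(F)_{y,r_i}$, plus $G^{i+1}$-genericity of depth $r_i$ relative to $y$ for $0 \le i \le d-1$, and the edge case for $\phi_d$) are precisely the conditions imposed on the characters in a Howe factorization in \cite[Definition~3.6.2]{Kal}, so these transfer directly once one checks notation matches.

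The main content is \textbf{D4}: we must show that $\rho_{-1} = \text{ind}_{S(F)G^0(F)_{y,0}}^{G^0(F)_{[y]}} \kappa_{(S,\phi_{-1})}$ is irreducible and that $\rho_{-1}\restriction_{G^0(F)_{y,0}}$ is the inflation of a cuspidal representation of $G^0(F)_{y,0}/G^0(F)_{y,0+}$. The plan is first to show, using that $S$ is elliptic and $[y]$ is its associated vertex, that $G^0(F)_{[y]} = S(F)G^0(F)_y$ with $S(F) \cap G^0(F)_y = S(F)_0 \subseteq G^0(F)_{y,0}$; thus the index $[G^0(F)_{[y]} : S(F)G^0(F)_{y,0}]$ equals the finite index $[G^0(F)_y : G^0(F)_{y,0}]$. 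Applying Mackey, $\rho_{-1}\restriction_{G^0(F)_{y,0}}$ decomposes as a direct sum of $G^0(F)_y$-conjugates of $\kappa_{(S,\bar\phi_{-1})}$, each of which is a Deligne--Lusztig representation $\pm R_{\mathsf{S}'^{\circ},\bar\phi'_{-1}}$ attached to an elliptic maximal torus of $\mathsf{G}_y^{\circ}$ with character in general position (by regularity of $\theta$), hence cuspidal. For irreducibility of $\rho_{-1}$, apply Mackey's irreducibility criterion: for $g \in G^0(F)_{[y]} \setminus S(F)G^0(F)_{y,0}$, one shows that $\kappa_{(S,\phi_{-1})}$ and ${}^g\kappa_{(S,\phi_{-1})}$ share no common constituent on their common intersection, which reduces to the disjointness of Deligne--Lusztig characters attached to non-conjugate data, the key input being the regularity of $\theta$. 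The main obstacle is carrying out this Mackey analysis for \textbf{D4}; once the reduction to Deligne--Lusztig disjointness is made, the regularity hypothesis ensures that the standard machinery applies cleanly.
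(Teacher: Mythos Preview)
The paper's proof is a one-line citation to \cite[Proposition~3.7.8]{Kal}; your proposal instead unpacks the argument behind that citation. For \textbf{D1}--\textbf{D3} and \textbf{D5} your outline is accurate and matches what Kaletha does. For \textbf{D4} your overall strategy---Mackey's criterion for the induced representation, reduction to disjointness of Deligne--Lusztig characters, and invoking regularity of $\theta$---is exactly the argument in \cite[Lemma~3.4.20, Proposition~3.7.8]{Kal} (and is reproduced in this paper as Proposition~\ref{irr} for ${}^{\circ}\!\rho_{-1}$). So in substance you are recovering the cited proof rather than giving a different one.

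That said, two of your intermediate claims in the \textbf{D4} step are inaccurate and should be dropped, since they are neither correct as stated nor needed. First, $S(F)\cap G^0(F)_y$ is the maximal bounded subgroup $S(F)_b$, not $S(F)_0$; these can differ when $S$ has nontrivial component group. Second, the equality $G^0(F)_{[y]} = S(F)G^0(F)_y$ is not obvious and you do not use it in any essential way: the Mackey irreducibility criterion does not require identifying $G^0(F)_{[y]}$ in advance, only showing that any $g\in G^0(F)_{[y]}$ intertwining $\kappa_{(S,\phi_{-1})}$ already lies in $S(F)G^0(F)_{y,0}$. That is precisely what the Deligne--Lusztig disjointness plus regularity argument gives (cf.\ the proof of Proposition~\ref{irr}). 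Likewise your index computation $[G^0(F)_{[y]}:S(F)G^0(F)_{y,0}] = [G^0(F)_y:G^0(F)_{y,0}]$ rests on the unproved equality and is unnecessary; Mackey restriction yields a direct sum over $G^0(F)_{y,0}\backslash G^0(F)_{[y]}/S(F)G^0(F)_{y,0}$, and each summand is a conjugate of $\kappa_{(S,\bar\phi_{-1})}$ and hence cuspidal, which is all that \textbf{D4} requires.
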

\begin{proof}
This is a part of \cite[Proposition~3.7.8]{Kal}.
\end{proof}
Using this datum, we construct an irreducible supercuspidal representation $\pi_{\left(S, \theta\right)}$ of $G(F)$, which only depends on $(S, \theta)$. An irreducible supercuspidal representation of $G(F)$ obtained in this way is called \emph{regular}.

Next, we define \emph{regular supercuspidal types}.
Let $\left(S, \theta \right)$ be a regular tame elliptic pair and define $G^i, r_i, \phi_i, \kappa_{(S, \phi_{-1})}$ as above. We define
\[^\circ\!\rho_{-1}=\text{ind}_{G^0(F)_{y}\cap S(F) G^0(F)_{y, 0}}^{G^0(F)_y} {^\circ\!\kappa_{(S, \phi_{-1})}},\]
where $^\circ\!\kappa_{(S, \phi_{-1})}$ denotes the restriction of $\kappa_{(S,\phi_{-1})}$ to $G^0(F)_{y}\cap S(F) G^0(F)_{y, 0}$.
\begin{proposition}
\label{irr}
The representation $^\circ\!\rho_{-1}$ is an irreducible representation of $G^0(F)_y$.
\end{proposition}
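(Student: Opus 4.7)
The plan is to apply Mackey's irreducibility criterion to the induced representation
\[
{}^\circ\!\rho_{-1} = \text{ind}_{H}^{G^0(F)_y} {}^\circ\!\kappa_{(S, \phi_{-1})},\qquad H := G^0(F)_y \cap S(F) G^0(F)_{y, 0}.
\]
Concretely, I must show that for every $g \in G^0(F)_y \setminus H$ the intertwining space $\text{Hom}_{H \cap {}^gH}({}^\circ\!\kappa_{(S, \phi_{-1})}, {}^g({}^\circ\!\kappa_{(S, \phi_{-1})}))$ vanishes.

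First, I would pin down $H$ explicitly. Since $y \in \mathcal{A}(S, E)$ and $S(F)$ acts on its own apartment by translations through the Kottwitz homomorphism, the intersection $S(F) \cap G^0(F)_y$ equals the maximal bounded subgroup $S(F)_b$, so $H = S(F)_b \cdot G^0(F)_{y, 0}$. By construction $\kappa_{(S, \phi_{-1})}$ is trivial on $G^0(F)_{y, 0+}$, so the entire intertwining problem takes place inside the finite group $\mathsf{G}_y(k_F) := G^0(F)_y / G^0(F)_{y, 0+}$. Writing $\tilde{\mathsf{S}}(k_F)$ for the image of $S(F)_b$ there, the image of $H$ is $\tilde{\mathsf{S}}(k_F) \cdot \mathsf{G}^\circ_y(k_F)$.

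Restrict any hypothetical intertwiner to the subgroup $G^0(F)_{y, 0} \subseteq H \cap {}^gH$. The representations ${}^\circ\!\kappa_{(S, \phi_{-1})}$ and ${}^g({}^\circ\!\kappa_{(S, \phi_{-1})})$ then reduce to the Deligne-Lusztig representations $R_{\mathsf{S}^\circ, \bar\phi_{-1}}$ and $R_{{}^{\bar g}\mathsf{S}^\circ, {}^{\bar g}\bar\phi_{-1}}$ of $\mathsf{G}^\circ_y(k_F)$, where $\bar g$ denotes the image of $g$. The regularity of $(S, \theta)$ makes $\bar\phi_{-1}$ sufficiently generic for $R_{\mathsf{S}^\circ, \bar\phi_{-1}}$ to be irreducible, and by the standard Deligne-Lusztig isomorphism criterion, the existence of a nonzero intertwiner requires $\bar g$ to lie in $N_{\mathsf{G}_y}(\mathsf{S}^\circ)(k_F) \cdot \mathsf{G}^\circ_y(k_F)$ and, after modifying $g$ within its left $G^0(F)_{y,0}$-coset, to act trivially on $\bar\phi_{-1}$.

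The decisive step is then to upgrade this stabilizer condition to the conclusion $\bar g \in \tilde{\mathsf{S}}(k_F) \cdot \mathsf{G}^\circ_y(k_F)$, which lifts to $g \in H$ and yields the desired contradiction. Equivalently, this amounts to showing that the stabilizer of $\bar\phi_{-1}$ in the twisted Weyl group $N_{\mathsf{G}_y}(\mathsf{S}^\circ)(k_F)/\tilde{\mathsf{S}}(k_F)$ is trivial, which is precisely the content of Kaletha's regularity hypothesis \cite[Definition~3.7.5]{Kal} transferred to the reductive quotient at $y$. Unpacking this triviality from the definition of a regular tame elliptic pair is the principal obstacle; I expect the argument to parallel the analogous depth-zero irreducibility step carried out for $\rho_{-1}$ itself in Kaletha's construction, after which Mackey's criterion immediately gives the irreducibility of ${}^\circ\!\rho_{-1}$.
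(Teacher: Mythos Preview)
Your approach is essentially the same as the paper's: both apply Mackey's criterion, restrict a putative intertwiner to $G^0(F)_{y,0}$, invoke the Deligne--Lusztig disjointness theorem \cite[Theorem~6.8]{DL} to force $g$ (after adjusting by an element of $G^0(F)_{y,0}$) to normalize $\mathsf{S}^\circ$ and fix $\bar\phi_{-1}$, and then appeal to regularity. The ``principal obstacle'' you flag is handled in the paper not by transferring regularity down to the finite quotient, but by lifting back up: \cite[Lemma~3.4.5]{Kal} produces $l\in G^0(F)_{y,0+}$ with $lhg\in N_{G^0}(S)(F)$ fixing $\phi_{-1}\restriction_{S(F)_0}$, and then the regularity condition on $(S,\theta)$ at the $p$-adic level gives $lhg\in S(F)$ directly.
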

\begin{proof}
This is essentially the same as \cite[Proposition~3.4.20]{Kal}, which is the same as the one for \cite[Lemma~4.5.1]{DR}.

Since $G^0(F)_{y}\cap S(F) G^0(F)_{y, 0}$ contains $G^0(F)_{y, 0}$ and $\kappa_{(S, \phi_{-1})}\restriction_{G^0(F)_{y,0}}=\kappa_{(S, \bar{\phi}_{-1})}$ is irreducible, $^\circ\!\kappa_{(S, \phi_{-1})}$ is also irreducible. Therefore it is enough to show that if $g\in G^0(F)_y$ intertwines $^\circ\!\kappa_{(S, \phi_{-1})}$, then $g\in G^0(F)_{y}\cap S(F) G^0(F)_{y, 0}$. Suppose $g\in G^0(F)_y$ intertwines $^\circ\!\kappa_{(S, \phi_{-1})}$, then $g$ also intertwines $\kappa_{(S, \bar{\phi}_{-1})}$. Hence by \cite[Theorem~6.8]{DL}, there is $h\in G^0(F)_{y,0}$ so that $\text{Ad}(hg)\left(\mathsf{S}^{\circ}, \bar{\phi}_{-1}\right)=\left(\mathsf{S}^{\circ}, \bar{\phi}_{-1}\right)$. By \cite[Lemma~3.4.5]{Kal}, there is $l\in G^0(F)_{y, 0+}$ so that $\text{Ad}(lhg)\left(S, \phi_{-1}\restriction_{S(F)_0}\right)=\left(S, \phi_{-1}\restriction_{S(F)_0}\right)$. Thus $lhg\in S(F)$ by the regularity of $\theta$, and $g\in G^0(F)_y\cap S(F) G^0(F)_{y,0}$.
\end{proof}
This proposition implies that $\left((G^i)_{i=0}^{d}, y, (r_{i})_{i=0}^{d}, {^\circ\!\rho_{-1}}, (\phi_i)_{i=0}^{d}\right)$ satisfies {\bf D1}, {\bf D2}, {\bf D3}, {$^\circ${\bf D4}}, {\bf D5}. We construct a $[G, \pi_{\left(S, \theta\right)}]_G$-type $\left({^\circ\!K}, {^\circ\!\rho}\right)$ from this datum. We call types obtained in this way \emph{regular supercuspidal types}.
\begin{remark}
Let $\left(S, \theta\right)$ be a regular tame elliptic pair. Let $\rho_{-1}$ be the representation of $G^0(F)_{[y]}$ and $^\circ\!\rho_{-1}$ be the representation of $G^0(F)_{y}$ defined as above. If $\rho'$ is an irreducible representation of $G^0(F)_y$ which is contained in $\rho_{-1}\restriction_{G^0(F)_y}$ then there is $g\in G^0(F)_{[y]}$ so that $\rho'$ is equivalent to $^g\!\left(^\circ\!\rho_{-1}\right)$. Now,
\[\rho'\simeq ^g\!\left(^\circ\!\rho_{-1}\right)\simeq \text{ind}_{G^0(F)_{y}\cap gS(F)g^{-1} G^0(F)_{y, 0}}^{G^0(F)_y} {^\circ\!\kappa_{(gSg^{-1}, ^{g}\!\phi)}}\]
and $\left(gSg^{-1}, ^{g}\!\phi\right)$ is a regular tame elliptic pair. Therefore, the $[G, \pi_{\left(S, \theta\right)}]_G$-type constructed by the datum $\left((G^i)_{i=0}^{d}, y, (r_{i})_{i=0}^{d}, \rho', (\phi_i)_{i=0}^{d}\right)$ is regular supercuspidal.
\end{remark}
We determine the structure of Hecke algebras associated with regular supercuspidal types. So, let $\left(S, \theta\right)$ be a regular tame elliptic pair and $\left({^{\circ}\!K}, {^{\circ}\!\rho}\right)$ be as above. We consider the  Hecke algebra $\check{\mathcal{H}}\left(G, {^\circ\!\rho}\right)$ associated with the type $\left({^\circ\!K}, {^\circ\!\rho}\right)$.
\begin{proposition}
The support of $\check{\mathcal{H}}\left(G, {^\circ\!\rho}\right)$ is $S(F) {^{\circ}\!K}$.
\end{proposition}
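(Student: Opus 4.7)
The plan is to combine Proposition~\ref{support} with a direct identification of the intertwining set. Writing $G^0_{^\circ\!\rho} = \{g \in G^0(F)_{[y]} \mid g \text{ intertwines } {^\circ\!\rho_{-1}}\}$, Proposition~\ref{support} together with Remark~\ref{remsup} gives that the support of $\check{\mathcal{H}}(G, {^\circ\!\rho})$ equals $G^0_{^\circ\!\rho} \cdot {^\circ\!K}$ (this is exactly the observation used at the start of the proof of the preceding theorem, applied to the type $({^\circ\!K}, {^\circ\!\rho})$ itself rather than to a tower). Therefore the task reduces to showing $G^0_{^\circ\!\rho} \cdot {^\circ\!K} = S(F) \cdot {^\circ\!K}$.

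The first step will be to prove $G^0_{^\circ\!\rho} = G^0(F)_{[y]}$. Since $[y]$ is a vertex of the reduced building of $G^0(F)$, I will invoke the standard Bruhat--Tits identity $G^0(F)_{[y]} = Z(G^0)(F) \cdot G^0(F)_y$: elements of $G^0(F)_{[y]}$ act on the fiber of $\mathcal{B}(G^0, F) \to \mathcal{B}^{\mathrm{red}}(G^0, F)$ above $[y]$ by pure translations, and all such translations are realized by $Z(G^0)(F)$. Elements of $G^0(F)_y$ trivially intertwine ${^\circ\!\rho_{-1}}$. For $z \in Z(G^0)(F)$, centrality of $z$ in $G^0(F)$ gives $zG^0(F)_y z^{-1} = G^0(F)_y$ and ${^z({^\circ\!\rho_{-1}})} = {^\circ\!\rho_{-1}}$ as representations of $G^0(F)_y$; irreducibility of ${^\circ\!\rho_{-1}}$ (Proposition~\ref{irr}) together with Schur's lemma then gives that the identity map is a nonzero intertwiner, so $z$ intertwines ${^\circ\!\rho_{-1}}$. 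Since the intertwining set is closed under right multiplication by $G^0(F)_y$, the products $zh$ with $z \in Z(G^0)(F)$ and $h \in G^0(F)_y$ intertwine as well, yielding $G^0(F)_{[y]} \subseteq G^0_{^\circ\!\rho}$; the reverse inclusion is immediate from the definition.

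The second step will be to identify $G^0(F)_{[y]} \cdot {^\circ\!K}$ with $S(F) \cdot {^\circ\!K}$. Since $G^0(F)_y \subseteq {^\circ\!K}$,
\[
G^0(F)_{[y]} \cdot {^\circ\!K} = Z(G^0)(F) \cdot G^0(F)_y \cdot {^\circ\!K} = Z(G^0)(F) \cdot {^\circ\!K} \subseteq S(F) \cdot {^\circ\!K},
\]
using $Z(G^0) \subseteq S$. Conversely, $S$ is elliptic in $G^0$ (ellipticity of $S$ in $G$ means $S/Z(G)$ is anisotropic, and the inclusion $Z(G) \subseteq Z(G^0)$ then forces $S/Z(G^0)$ anisotropic), so $S(F)$ acts trivially on the reduced apartment containing $[y]$; in particular $S(F) \subseteq G^0(F)_{[y]}$ and so $S(F) \cdot {^\circ\!K} \subseteq G^0(F)_{[y]} \cdot {^\circ\!K}$. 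Combining both inclusions yields the desired support equality.

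The main potential obstacle is the Bruhat--Tits input $G^0(F)_{[y]} = Z(G^0)(F) \cdot G^0(F)_y$, together with the precise description of how $Z(G^0)(F)$ realizes the translation subgroup of the stabilizer of a vertex in the reduced building. Once these structural facts are granted, every remaining step is essentially formal: the only representation-theoretic input is Schur's lemma applied to the irreducible representation ${^\circ\!\rho_{-1}}$.
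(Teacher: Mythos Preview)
Your argument has a genuine gap: the identity $G^0(F)_{[y]} = Z(G^0)(F)\cdot G^0(F)_y$ that you invoke is not a general fact of Bruhat--Tits theory, and it fails already in very concrete cases. Take $G^0 = D^\times$ for a central division algebra $D$ of degree $n>1$ over $F$ (which does occur as the bottom group of a tame twisted Levi sequence). The reduced building is a point, so $G^0(F)_{[y]}=D^\times$, while $G^0(F)_y=O_D^\times$ and $Z(G^0)(F)=F^\times$. The quotient $D^\times/O_D^\times\simeq\mathbb{Z}$ is generated by a uniformizer $\pi_D$, but $F^\times$ only hits the sub\-group $n\mathbb{Z}$ (since $\mathrm{Nrd}(\lambda)=\lambda^n$). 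Thus $Z(G^0)(F)\cdot G^0(F)_y\subsetneq G^0(F)_{[y]}$. Worse, your Step~1 conclusion $G^0_{^\circ\!\rho}=G^0(F)_{[y]}$ is itself false here: conjugation by $\pi_D$ acts on the residue field $k_D$ by Frobenius, so it twists the depth-zero character $\bar\phi_{-1}$ by Frobenius, and the \emph{regularity} of $\theta$ is exactly what forces $\bar\phi_{-1}\circ\mathrm{Frob}\neq\bar\phi_{-1}$. Hence $\pi_D$ does not intertwine ${^\circ\!\rho_{-1}}$, and the intertwining set is strictly smaller than $G^0(F)_{[y]}$.

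Notice that your argument never uses the regularity hypothesis on $(S,\theta)$ in any essential way; that is a warning sign, because regularity is precisely the mechanism that pins the intertwining set down to $S(F)G^0(F)_y$. The paper's proof proceeds differently: it exploits the fact that ${^\circ\!\rho_{-1}}$ is \emph{induced} from ${^\circ\!\kappa_{(S,\phi_{-1})}}$, so an intertwiner of ${^\circ\!\rho_{-1}}$ yields (after translating by an element of $G^0(F)_y$) an intertwiner of ${^\circ\!\kappa_{(S,\phi_{-1})}}$; one then reruns the Deligne--Lusztig argument from Proposition~\ref{irr} (using \cite[Theorem~6.8]{DL}, \cite[Lemma~3.4.5]{Kal}, and regularity) to land in $S(F)G^0(F)_{y,0}$. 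This gives $G^0_{^\circ\!\rho}=S(F)G^0(F)_y$ directly, and multiplying by ${^\circ\!K}$ finishes.
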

\begin{proof}
By Proposition \ref{support}, it is enough to show that $g\in G^0(F)_{[y]}$ intertwines $^\circ\!\rho_{-1}$ if and only if $g\in S(F)G^0(F)_{y}$. If $g\in S(F)G^0(F)_{y}$, it is obvious that $g$ intertwines $^\circ\!\rho_{-1}$. Conversely, suppose that $g\in G^0(F)_{[y]}$ intertwines $^\circ\!\rho_{-1}$. By the construction of $^\circ\!\rho_{-1}$, there exists $g_{0}\in G^0(F)_{y}$ so that $gg_{0}$ intertwines $^\circ\!\kappa_{(S, \phi_{-1})}$. Then, as in the proof of Proposition \ref{irr}, we conclude that $gg_{0}\in S(F)G^0(F)_{y, 0}$, and so $g\in S(F)G^0(F)_{y}$. 
\end{proof}
We define $S(F)_{b}={^\circ\!K}\cap S(F)=G^0(F)_{y} \cap S(F)$, which is the unique maximal compact subgroup of $S(F)$.
\begin{corollary}
The algebra $\check{\mathcal{H}}\left(G, {^\circ\!\rho}\right)$ is isomorphic to the group algebra $\mathbb{C}[S(F)/S(F)_{b}]$ of $S(F)/{S(F)_{b}}$.
\end{corollary}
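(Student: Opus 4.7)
The plan is to invoke the support-preserving algebra isomorphism $\check{\mathcal{H}}(G(F),{^\circ\!\rho})\simeq\check{\mathcal{H}}(G^0(F),{^\circ\!\rho_{-1}})$ established in the previous theorem, which reduces the problem to identifying $\check{\mathcal{H}}(G^0(F),{^\circ\!\rho_{-1}})$ with $\mathbb{C}[S(F)/S(F)_b]$. I would then construct a canonical extension of ${^\circ\!\rho_{-1}}$ to the support group, which trivializes the cocycle $c_{-1}$ appearing in the convolution formula from that theorem.

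The structural input I would establish first is that $S(F)$ normalizes $G^0(F)_y$. Since $S$ is tame elliptic in $G$, it is also elliptic in $G^0$, so any $s\in S(F)$ acts on $\mathcal{B}(G^0,F)$ by translation along the direction $X_*(Z(G^0))_{\mathbb{R}}$; consequently $sy$ and $y$ share the same image $[y]$ in the reduced building, and since the stabilizer of a point in the enlarged building depends only on this reduced image together with the Kottwitz kernel condition, we obtain $sG^0(F)_ys^{-1}=G^0(F)_{sy}=G^0(F)_y$. Combined with $S(F)\cap G^0(F)_y=S(F)_b$, this makes $S(F)G^0(F)_y$ a subgroup of $G^0(F)_{[y]}$ with $S(F)G^0(F)_y/G^0(F)_y\cong S(F)/S(F)_b$, which by the preceding proposition parametrizes the double cosets of $G^0(F)_y$ in the support of $\check{\mathcal{H}}(G^0(F),{^\circ\!\rho_{-1}})$.

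Next I would extend ${^\circ\!\rho_{-1}}$ to a representation $\tilde{\rho}$ of $S(F)G^0(F)_y$ by setting $\tilde{\rho}:=\text{ind}_{\tilde{S}(F)}^{S(F)G^0(F)_y}\kappa_{(S,\phi_{-1})}$ with $\tilde{S}(F):=S(F)G^0(F)_{y,0}$; because $S(F)G^0(F)_y=G^0(F)_y\cdot\tilde{S}(F)$ consists of a single double coset, a one-term Mackey decomposition gives $\tilde{\rho}|_{G^0(F)_y}={^\circ\!\rho_{-1}}$. For each class in $S(F)/S(F)_b$ with representative $s$, I would define $f_s\in\check{\mathcal{H}}(G^0(F),{^\circ\!\rho_{-1}})$ to be supported on $sG^0(F)_y$ with $f_s(g)=\tilde{\rho}(g)$ there. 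The basis element $(T_s)_{-1}$ from the proof of the previous theorem can then be taken to be $\tilde{\rho}(s)$, and direct substitution into the formula $c_{-1}\cdot(T_{s_3})_{-1}=(T_{s_1})_{-1}\circ(T_{s_2})_{-1}\circ{^\circ\!\rho_{-1}}((s_1s_2)^{-1}s_3)$ collapses to $c_{-1}=1$ by the homomorphism property of $\tilde{\rho}$, so $f_{s_1}*f_{s_2}=f_{s_1s_2}$ and $[s]\mapsto f_s$ yields the desired algebra isomorphism.

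The step I expect to be the main obstacle is the structural claim that $S(F)$ normalizes $G^0(F)_y$: this is what simultaneously makes $S(F)G^0(F)_y$ a subgroup, allows $\tilde{\rho}$ to be an honest group representation (so that the Mackey calculation yielding $\tilde{\rho}|_{G^0(F)_y}={^\circ\!\rho_{-1}}$ goes through), and forces the cocycle $c_{-1}$ to be identically $1$ rather than a non-trivial twist obstructing the group-algebra structure.
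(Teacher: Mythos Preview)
Your argument is correct, and the core mechanism---extending ${^\circ\!\rho_{-1}}$ to a genuine representation $\tilde\rho=\text{ind}_{S(F)G^0(F)_{y,0}}^{S(F)G^0(F)_y}\kappa_{(S,\phi_{-1})}$ of the support group so that the convolution cocycle collapses to $1$---is exactly what the paper does. The difference is in the route: you first invoke the support-preserving isomorphism $\check{\mathcal{H}}(G(F),{^\circ\!\rho})\simeq\check{\mathcal{H}}(G^0(F),{^\circ\!\rho_{-1}})$ and then work entirely at depth zero in $G^0$, whereas the paper deliberately avoids that theorem (the introduction stresses that this corollary is proved \emph{independent} of Yu's conjecture). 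Instead, the paper uses the same extension of ${^\circ\!\rho_{-1}}$ to $S(F)G^0(F)_y$ and then extends ${^\circ\!\rho}$ itself to $S(F){^\circ\!K}$, defining $f_{[g]}(x)={^\circ\!\rho}(x)$ on each coset and checking $f_{[g]}*f_{[h]}=f_{[gh]}$ directly in $\check{\mathcal{H}}(G(F),{^\circ\!\rho})$. Your approach buys a shorter endgame once the main theorem is available; the paper's buys logical independence from that theorem.

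One minor remark: the ``main obstacle'' you flag---that $S(F)$ normalizes $G^0(F)_y$---is in fact routine. Since $S$ is elliptic in $G^0$ we have $S(F)\subset G^0(F)_{[y]}$, and $G^0(F)_y=G^0(F)_{[y]}\cap\ker\nu$ where $\nu\colon G^0(F)\to X_*(Z(G^0))_{\mathbb{R}}$ is the translation homomorphism; as $\ker\nu$ is normal in $G^0(F)$, the whole group $G^0(F)_{[y]}$ normalizes $G^0(F)_y$. So this step requires no special care.
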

\begin{proof}
Since ${^\circ\!\rho_{-1}}$ is the restriction of $\text{ind}_{S(F) G^0(F)_{y,0}}^{S(F) G^0(F)_{y}} \kappa_{(S, \phi_{-1})}$ to $G^0(F)_{y}$, we can extend ${^\circ\!\rho_{-1}}$ to $S(F) G^0(F)_{y}$. Therefore, we can extend ${^\circ\!\rho}$ to $S(F) {^\circ\!K}$. For $[g]=g {^\circ\!K}\in S(F) {^\circ\!K}/{^\circ\!K}$, we define $f_{[g]} \in \check{\mathcal{H}}\left(G, {^\circ\!\rho}\right)$ by
\begin{align*}
f_{[g]}(x)&=
\begin{cases}
{^\circ\!\rho(x)} & (x\in [g])\\
0 & (\text{otherwise}).
\end{cases}
\end{align*}
Then, as a vector space
\[\check{\mathcal{H}}\left(G(F), {^\circ\!\rho}\right)=\bigoplus_{[g]\in S(F) {^\circ\!K}/{^\circ\!K}} \mathbb{C}f_{[g]}\]
and for $[g], [h]\in S(F) {^\circ\!K}/{^\circ\!K}$, $f_{[g]}*f_{[h]}=f_{[gh]}$. Hence $\check{\mathcal{H}}\left(G(F), {^\circ\!\rho}\right)$ is isomorphic to the group algebra of $S(F) {^\circ\!K}/{^\circ\!K}\simeq S(F)/S(F)_{b}$.
\end{proof}


\begin{thebibliography}{999}
\bibitem{AM} Jeffrey D. Adler and Manish Mishra, \emph{Regular bernstein blocks}, 2019. Preprint available ar \url{https://arxiv.org/pdf/1909.09966.pdf}.
\bibitem{Ber} J. N. Bernstein, Le ``center'' de Bernstein, in \emph{Representations of reductive groups over a local field}, Travaux en Cours, Hermann, Paris, 1-31 (1984).
\bibitem{BK1} Colin J. Bushnell and Philip C. Kutzko, \emph{The admissible dual of $GL(N)$ via compact open subgroups}, Annals Math. Studies, Princeton Univ. Press (1993).
\bibitem{BK2} Colin J. Bushnell and Philip C. Kutzko, \emph{Smooth representations of reductive $p$-adic groups: structure theory via types}, Proc. London Math. Soc. (3) {\bf77} (1998), no. 3, 582-634.
\bibitem{BT} F. Bruhat et J. Tits, \emph{Groupes r\'eductifs sur un corps local, Chapitre I}, Publ. Math. I.H.E.S. {\bf 41}, 5-251 (1972).
\bibitem{DL} P. Deligne and G. Lusztig, \emph{Representations of reductive group over finite fields}, Ann. of Math. (2) {\bf 103} (1976), no. 1, 103-161.
\bibitem{DR} Stephen DeBacker and Mark Reeder, \emph{Depth-zero supercuspidal L-packets and their stability}, Ann. of Math. (2) {\bf 169} (2009), no.3, 795-901.
\bibitem{Fin1} Jessica Fintzen, \emph{On the construction of tame supercuspidal representations}, 2019. Preprint available at \url{https://arxiv.org/pdf/1908.09819.pdf}.
\bibitem{Ge} Paul G\'erardin, \emph{Weil representation associated to finite fields}, J. Algebra {\bf 46} (1997), no. 1, 54-101.
\bibitem{Kal} Tasho Kaletha, \emph{Regular supercuspidal representations}, J. Amer. Math. Soc. {\bf32} (2019), no.4, 1071-1170.
\bibitem{Mo} L. E. Morris, \emph{Tamely ramified intertwining algebras}, Invent. Math. {\bf 114}, (1993) 1-54.
\bibitem{MP1} A. Moy and G. Prasad, \emph{Unrefined minimal $K$-types for $p$-adic groups}, Inv. Math. {\bf 116}, 393-408 (1994).
\bibitem{MP2} A. Moy and G. Prasad, \emph{Jacquet functors and unrefined minimal $K$-types}, Comment. Math. Helvetici {\bf 71}, 96-121(1996).
\bibitem{Yu} Jiu-Kang Yu, \emph{Construction of tame supercuspidal representations}, J. Amer. Math. Soc. {\bf14} (2001), no.3, 579-622(electronic).
\end{thebibliography}
\end{document}